\title[A new extrapolation method for weak approximation schemes]{A new extrapolation method for weak approximation schemes with applications}
\author{Kojiro Oshima}
\address{Graduate School of Information Science and Technology, The University of Tokyo, 7-3-1 Hongo, Bunkyo-ku, Tokyo 113-8656, Japan}
\email{kojiro.oshima\@@gmail.com}
\author{Josef Teichmann}
\address{D-MATH, ETH Z\"urich, R\"amistrasse 101, 8092 Z\"urich, Switzerland}
\email{josef.teichmann\@@math.ethz.ch}
\author{Dejan Velu\v{s}\v{c}ek}
\address{University of Ljubljana, Faculty of Mathematics and Physics, Department of Mathematics, Jadranska 19, SI-1000 Ljubljana, SLovenia}
\email{dejan.veluscek\@@fmf.uni-lj.si}
\theoremstyle{plain}
\newtheorem{theo}{Theorem}
\newtheorem{prop}{Proposition}
\newtheorem{lem}{Lemma}
\newtheorem{cor}{Corollary}
\theoremstyle{definition}
\newtheorem{defi}{Definition}
\theoremstyle{remark}
\newtheorem{ex}{Example}
\newtheorem{rem}{Remark}
\providecommand{\N}{\mathbf{N}}
\providecommand{\R}{\mathbf{R}}
\begin{document}

\begin{abstract}
We review Fujiwara's scheme, a sixth order weak approximation scheme for the numerical approximation of SDEs, and embed it into a general method to construct weak approximation schemes of order $ 2m $ for $ m \in \mathbf{N} $. Those schemes cannot be seen as cubature schemes, but rather as universal ways how to extrapolate from a lower order weak approximation scheme, namely the Ninomiya-Victoir scheme, for higher orders.
\end{abstract}
\keywords{weak approximation schemes, high order, cubature methods, extrapolation, Ninimiya-Victoir scheme, Fujiwara scheme. \textit{MSC 2000:} Primary: 65H35; Secondary: 65C30}
\maketitle

\section{Introduction}

The Ninomiya-Victoir scheme for the weak approximation of solutions of stochastic differential equations can be described in the following framework: let $ (\Omega, \mathscr{F}, P) $ be a probability space and let $ \{B^1_t,\ldots ,B^d_t\}_{t \in \mathbf{R_+}} $ be a $ d $-dimensional standard Brownian motion. Define $ B^0_t:=t $ and $ B_t := (B_t^0, B^1_t, \ldots ,B^d_t) $. We consider stochastic differential equations driven by the Brownian motion $ \{B_t\}_{t \in \mathbf{R}_+} $
\begin{equation} \label{sde}
X(t,x) = x + \displaystyle\sum_{i=0}^d{\int_0^t{V_i(X(s,x))\circ dB^i_s}} 
\end{equation}
where $ x $ is in $ \mathbf{R}^N $, $ V_i \in C_b^\infty(\mathbf{R}^N;\mathbf{R}^N) $ and $ \circ $ stands for Stratonovich integral. We associate for later use the following simple stochastic differential equations to equation \eqref{sde}
\begin{equation} \label{simple_sde}
X^{(i)}(t,x) = x + \int_0^t{V_i(X^{(i)}(s,x))\circ dB^i_s}. 
\end{equation}
Let $ \{P_t\}_{t \in \mathbf{R}_+}  $  and $ \{P^{(i)}_t\}_{t \in \mathbf{R}_+} $ be the associated heat semigroups on $ C_b^\infty(\mathbf{R}^d) $ such that $P_tf(x) := E[f(X(t,x))] $ for $ t \geq 0 $, and $P^{(i)}_tf(x) := E[f(X^{(i)}(t,x))] $ for $ t \geq 0 $. Notice here that the equation associated to the index $0$ is a pure drift equation, the semigroup a transport semigroup. Denote furthermore by
\begin{align*}
\mathcal{A} :&= V_0 + \frac{1}{2}\sum_{i=1}^d V_i^2, \\
\overrightarrow{Q_t^{[\theta]}} :&= \left( P^{(0)}_{t/\theta} \circ \cdots \circ P^{(d)}_{t/\theta}\right)^\theta, \\ 
\overleftarrow{Q_t^{[\theta]}} :&= \left( P^{(d)}_{t/\theta} \circ \cdots \circ P^{(0)}_{t/\theta}\right)^\theta, \\
Q_t^{[\theta]} :&= \frac{1}{2}(\overrightarrow{Q_t^{[\theta]}} + \overleftarrow{Q_t^{[\theta]}}). 
\end{align*}
the generator of the diffusion process \eqref{sde}, two ordered products of (semi-)flows with generators $ V_0 $ and $ V_i^2 $ and the average of the two ordered products $ Q^{[\theta]} $. Then we have the well-known short time asymptotics, formulated in the language of $k$-norms (see Definition \ref{k_norms})
$$
| P_t g(x) - Q_t^{[\theta]} g(x) | \leq C t^3 \| g \|_{6 \theta (d+1) },
$$
as $ t \to 0 $, leading -- by iteration -- to the Ninomiya-Victoir scheme. Indeed, when we define $n$-fold iteration of the operator $Q_{\frac{T}{n}}^{[\theta]}$
$$
Q^{[\theta]}_{T,n} = Q_{\frac{T}{n}}^{[\theta]} \circ \cdots \circ Q_{\frac{T}{n}}^{[\theta]},
$$
we obtain a scheme of weak approximation order $r=2$, i.e.,
$$
| P_T g(x) - Q_{T,n}^{[\theta]} g(x) | \leq \frac{C}{n^2} \| g \|_{6 \theta (d+1) }.
$$
Let us define formally weak approximations of $ P_T $ for a some fixed, finite $ T \in \mathbf{R}_+ $ of weak approximation order $r$. 

\begin{defi}[scheme of weak approximation order $ r $]
A family of linear operators $ \{Q_{T,n}\}_{n \in \mathbf{N}} $ on $ C_b^\infty(\mathbf{R}^d) $, continuous with respect to the supremum norm topology, is called a \textit{scheme of weak approximation order $ r $} if there exists $ C >0 $ and some number $ k \ge 0 $ such that 
\begin{equation}
|P_Tf(x) - Q_{T,n}f(x)| \le \frac{C}{n^r} \| f \|_k
\end{equation}
for all $ x \in \mathbf{R}^N $ and  for all $ f \in C^\infty_b(\mathbf{R}^d) $.
\end{defi}

Notice that the operator $ Q_{T,n} $ is only supposed to be linear and continuous with respect to the supremum norm topology on the set of $C^\infty_b$-function, but not necessarily of sub-Markovian type. This means in particular that classical (Romberg-)extrapolations belong to this class.

In \cite{Fujiwara} T.~Fujiwara constructs a sixth order scheme for smooth functions $C_b^\infty(\mathbf{R}^N) $ which consists of a linear combination of the previously described Ninomiya-Victoir scheme. Through the linear combination T.~Fujiwara can ``extrapolate'' the weak approximation order to $ r=6 $. In this paper, we  define \textit{generalized Fujiwara schemes of order $r=2m$} including the scheme in \cite{Fujiwara} by refining Fujiwara's technique to prove the convergence order and construct versions of weak approximation order $ r=2m $ for $ m \in \mathbf{N} $. We finally obtain the following Theorem \ref{theo:final_result}, whose proof can be found in Section \ref{sec:scheme}, notations can be found in the subsequent sections:

Let $ \{q_n\}_{n\in\mathbf{N}} $ be a generalized Fujiwara scheme of order $2m$, then 
$$ Q_{T,n} := \sum_{i=1}^mf_{\theta_i}(Q_{\frac{T}{n}}^{[\theta_i]})^n $$
for $ n \geq 0 $ is a scheme of weak approximation of order $ 2m $, where a choice of $ k $ is given by
$$
k = 2(2m + 1)(d+1) \sum_{i=1}^m \theta_i,
$$
that means
$$
| P_T \, g(x) - Q_{T,n} \, g(x) | \leq \frac{C}{n^{2m}} \| g \|_{k}
$$
for test functions $ g \in C^\infty_b (\mathbb{R}^N) $.

The remainder of the article is organized as follows: in Section 2 we introduce all algebraic prerequisities, in Section 3 we show the main algebraic result of this article, which is then applied in Section 4 to prove the existence of generalized Fujiwara schemes. In Section 5 we provide an implementation result, where the results can be compared to \cite{ninnin:2009}. The appendix is devoted to an original proof of Fujiwara's basic algebraic result.

\section{Algebraic prerequisites and their relation to weak approximation}

Let $ A $ be a set whose elements are $ a_0, \ldots, a_d $. We call $ A $ an alphabet and $ a_0, \ldots, a_d $ letters. A word in alphabet $ A $ is a finite sequence of letters. Let $ 1 $ be a empty word and $ A^* $ a set of words including $ 1 $. If we impose a total ordering on $A$, then $A^*$ together with word concatenation and lexicographic ordering becomes an ordered unital semigroup. Let $ \mathbf{R}\langle A \rangle $ be a set of noncommutative polynomials on $ A^* $ over $ \mathbf{R} $ i.e. a set of $ \mathbf{R} $--linear combinations of elements of $ A^* $ and let $ \mathbf{R} \langle \langle A \rangle \rangle $ be a set of noncommutative series of elements of $ A^* $ with coefficients in $\mathbf{R} $, i.e. a set of functions $f\colon A^* \to \mathbf{R}$ with well ordered support. Using componentwise addition and multiplication, which is induced by word concatenation, makes $ \mathbf{R}\langle\langle A^* \rangle\rangle $ a $\mathbf{R}$--algebra (see \cite{cohn} for
  more details). The degree of a monomial is a number of letters contained in the monomial and the degree of a noncommutative polynomial and a noncommutative series are the maximum degree of monomials contained in them. Let $ \mathbf{R}\langle A \rangle_m $ and $ \mathbf{R}\langle A \rangle_{\le m} $ be the set of homogeneous polynomials of the degree $m$ and the set of polynomials of the degree less or equal to $ m $ respectively. Define $ \mathbf{R} \langle \langle A \rangle \rangle_m $ and $ \mathbf{R}\langle \langle A \rangle \rangle_{\le m} $ in the same manner. Since every $u\in \mathbf{R}\langle \langle A \rangle \rangle$ has a well ordered support, we can define $\mathbf{R}\langle \langle A \rangle \rangle_{>m}=\{u\in \mathbf{R}\langle \langle A \rangle \rangle\rvert\, \deg(\inf(\mathrm{supp}(u)))>m \}$ and $\mathbf{R}\langle \langle A \rangle \rangle_{\geq m}=\{u\in \mathbf{R}\langle \langle A \rangle \rangle\rvert\, \deg(\inf(\mathrm{supp}(u)))\geq m \}$ and it is easy to see that $ \mathbf{R}\langle \langle A \rangle \rangle_{> m}$ and $\mathbf{R}\langle \langle A \rangle \rangle_{\geq m} $ are double sided ideals in algebra $ \mathbf{R}\langle \langle A \rangle \rangle$. Let $j_m $ and $ j_{\le m} $ be the natural surjective maps from $ \mathbf{R}\langle \langle A \rangle \rangle $ onto $ \mathbf{R}\langle\langle A \rangle \rangle_m $ and $ \mathbf{R}\langle\langle A \rangle \rangle_{\le m} $ respectively. 

Since every subset of $A^*$ has a least element regarding lexicographical ordering, we have $\mathbf{R} \langle\langle A\rangle\rangle=\mathbf{R}^{A^*}$. The set $A^*$ is countable, therefore taking metric topology in $\mathbf{R}$ makes $\mathbf{R}^{A^*}$ with induced product topology into a Polish space. Hence, we can consider its Borel $\sigma$-algebra $\mathcal{B}(\mathbf{R}\langle \langle A \rangle \rangle)$, $\mathbf{R}\langle \langle A \rangle \rangle$--valued random variables and expectations, and other notions as usual.

For $ u \in \mathbf{R}\langle \langle A \rangle \rangle $ we define the exponential map
$$
\exp{(u)} := \sum_{n \ge  0}\frac{u^n}{n!},
$$
and for $ u \in \mathbf{R}\langle \langle A \rangle \rangle $ with vanishing constant term, we define the logarithm,
$$
\log{(1+u)}:= \sum_{n \ge 1 }\frac{(-1)^{n-1}}{n}u^n. 
$$
It is easy to check that 
\begin{align}
\log{(\exp{(u)})} &= u, \\
\exp{(\log{(u)})} &= u,
\end{align}
on the respective domains. For $ \theta \in \mathbf{N} $ define 
\begin{align*}
p :&= \exp{(\sum_{i=0}^da_i)}, \\
\overrightarrow{q^{[\theta]}} :&= \left( \exp{(\frac{1}{\theta}a_0)} \cdots \exp{(\frac{1}{\theta}a_d)}\right)^\theta, \\ 
\overleftarrow{q^{[\theta]}} :&= \left( \exp{(\frac{1}{\theta}a_d)} \cdots \exp{(\frac{1}{\theta}a_0)}\right)^\theta, \\
q^{[\theta]} :&= \frac{1}{2}(\overrightarrow{q^{[\theta]}} + \overleftarrow{q^{[\theta]}}). 
\end{align*}

Let us make the substitution, which is the heart of the transfer from algebra to numerical schemes, $ a_0 = V_0 , a_1 = V_1^2/2, \ldots , a_d = V_d^2/2 $ formally correct. Let $ B $ be another alphabet including $ v_0, v_1, \ldots , v_d $ and set $ B^*, \mathbf{R} \langle B \rangle , \ldots , $ in the same manner. For all $ t \in \mathbf{R}_+ $ define an algebra homomorphism $ \Psi_t : \mathbf{R} \langle \langle A \rangle \rangle \longrightarrow   \mathbf{R} \langle \langle B \rangle \rangle $ by setting
\begin{align}
\Psi_t (a_0) &:= tv_0, \\
\Psi_t (a_i) &:= tv_i^2/2.
\end{align}
for all $ i \in \{ 1,\ldots , d \} $. 

Define next an algebra homomorphism $\Phi\colon \mathbf{R} \langle B\rangle \to C_b^\infty(\mathbf{R}^N;\mathbf{R}^N)$ by setting
\begin{align}
\Phi (v_i) &= V_i.
\end{align} 
Let $D=\{ \sum_{w\in B^*} a_w w\rvert \sum_{w\in B^*} a_w \Phi(w) \text{ is well defined }\}$. Clearly, $\mathbf{R} \langle B\rangle\subset D$ and $D$ is a $\mathbf{R}$--subalgebra of $\mathbf{R} \langle\langle B\rangle\rangle$. The homomorphism $\Phi$ can then be uniquely extended to an $\mathbf{R}$--algebra homomorphism $\Phi\colon D \to C_b^\infty(\mathbf{R}^N;\mathbf{R}^N) $.

The algebra of non-commutative words plays a major role in the analysis of weak approximation schemes due to the following well-known asymptotic expansion theorem, which allows to approximate the truncated exponential series in $ \mathcal{A} $ by other simpler expressions.
\begin{theo}\label{theo:taylor}
For all function $ f \in C^\infty_{b}(\mathbf{R}^N)$, $ x \in \mathbf{R}^N $ and $ n \in \mathbf{N} $, it holds that
\begin{equation}
P_tf(x) = \sum_{k=0}^n{\frac{t^k}{k!}\mathcal{A}^kf(x)} + \mathcal{O}(t^{n+1}) = \Phi (\Psi_t (j_{\leq n} p))f(x) + \mathcal{O}(t^{n+1}).
\end{equation}
as $ t \to 0 $.
\end{theo}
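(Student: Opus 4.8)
The statement bundles two assertions under a single error term: the second equality is an exact algebraic identity, while the first is a genuine short-time expansion of the semigroup carrying all the analytic content. The plan is to dispatch these separately, since nothing quantitative is lost in the second.

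First I would prove the algebraic identity $\Phi(\Psi_t(j_{\le n}p)) = \sum_{k=0}^{n}\frac{t^{k}}{k!}\mathcal{A}^{k}$ as operators on $C_b^\infty(\mathbf{R}^N)$. Writing $s := \sum_{i=0}^{d} a_i$, the definition of $p=\exp(s)$ and of the truncation give $j_{\le n}p = \sum_{m=0}^{n}\frac{1}{m!}s^{m}$, and $\Psi_t(j_{\le n}p)$ is a genuine polynomial in $B$, hence lies in $\mathbf{R}\langle B\rangle\subset D$, so $\Phi$ applies. Because both $\Psi_t$ and $\Phi$ are algebra homomorphisms, so is the composite $\Phi\circ\Psi_t$, and it suffices to evaluate it on $s$ and take powers. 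A direct computation gives $\Phi(\Psi_t(a_0))=tV_0$ and $\Phi(\Psi_t(a_i))=\frac{t}{2}V_i^{2}$ for $i\ge 1$, where $\Phi(v_i^{2})=V_iV_i=V_i^{2}$ is the composition of first-order operators; thus $\Phi(\Psi_t(s))=t\mathcal{A}$. Multiplicativity then yields $\Phi(\Psi_t(s^{m}))=(t\mathcal{A})^{m}=t^{m}\mathcal{A}^{m}$, and summing over $m\le n$ gives the identity exactly, with no remainder. This reduces the theorem to the first equality.

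For the analytic expansion I would pass through the generator. Applying the Stratonovich chain rule to $f(X(t,x))$ and converting the Stratonovich integrals to It\^o integrals produces the second-order correction $\tfrac12\sum_{i\ge 1}V_i(V_if)$, so that in It\^o form $df(X)=\mathcal{A}f(X)\,dt+\sum_{i=1}^{d}V_if(X)\,dB^{i}$; taking expectations annihilates the mean-zero, bounded-integrand martingale terms and leaves the Dynkin identity $P_tf = f + \int_0^t P_s\mathcal{A}f\,ds$. This is where the precise form $\mathcal{A}=V_0+\tfrac12\sum_{i=1}^{d}V_i^{2}$ of the generator enters. Since $V_i\in C_b^\infty$, every $\mathcal{A}^{j}f$ again lies in $C_b^\infty$, so the identity may be applied to $\mathcal{A}f,\mathcal{A}^2 f,\dots$ in turn; an induction over the nested time integrals gives
\[
P_t f(x) = \sum_{k=0}^{n} \frac{t^{k}}{k!}\,\mathcal{A}^{k} f(x) + R_n(t,x),
\qquad
R_n(t,x) = \int_{0 \le s_{n+1} \le \cdots \le s_1 \le t} P_{s_{n+1}}\big(\mathcal{A}^{n+1} f\big)(x)\, ds ,
\]
where $ds=ds_{n+1}\cdots ds_1$ is the iterated Lebesgue measure on the time simplex, whose volume is $t^{n+1}/(n+1)!$.

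Finally I would bound the remainder. As $P_s$ is the expectation operator of a probability kernel it is a sup-norm contraction, $\|P_s g\|_\infty\le\|g\|_\infty$, so $|R_n(t,x)|\le \frac{t^{n+1}}{(n+1)!}\,\|\mathcal{A}^{n+1}f\|_\infty$. Since $\mathcal{A}$ is a second-order differential operator with coefficients in $C_b^\infty$, the operator $\mathcal{A}^{n+1}$ has order $2(n+1)$ with bounded smooth coefficients, whence $\|\mathcal{A}^{n+1}f\|_\infty\le C\,\|f\|_{2(n+1)}$; combined with the previous bound this gives $R_n=\mathcal{O}(t^{n+1})$ as $t\to 0$, with the error controlled by a $k$-norm of $f$ exactly as stated. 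I expect the only delicate points to be the Stratonovich-to-It\^o conversion together with the verification that the resulting It\^o integrals are true martingales (so their expectations vanish), and the fact that $P_s$ maps $C_b^\infty$ into itself with the estimates underlying the contraction property; both rest on the smoothness and boundedness of the stochastic flow guaranteed by $V_i\in C_b^\infty$, after which everything downstream is routine.
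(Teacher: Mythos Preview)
Your proof is correct and follows the standard route: the paper itself does not give a proof but simply cites Ikeda--Watanabe, and the later Proposition quoted from \cite{IW} in Section~\ref{sec:scheme} displays exactly the Taylor expansion with integral remainder $P_t g = \sum_{k=0}^N \frac{t^k}{k!}\mathcal{A}^k g + \frac{1}{N!}\int_0^t (t-s)^N P_s\mathcal{A}^{N+1}g\,ds$ that your Dynkin-iteration argument produces (in the equivalent iterated-integral form). Your separate treatment of the algebraic identity $\Phi(\Psi_t(j_{\le n}p))=\sum_{k\le n}\frac{t^k}{k!}\mathcal{A}^k$ is a clean addition the paper leaves implicit.
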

\begin{proof}
See \cite{IW}.
\end{proof}

Hence we can, e.g., express the generator $ \mathcal{A} $ of the diffusion process \eqref{sde} by
$$
\Phi(\Psi_1(a_0+\ldots+a_d)) = \mathcal{A}, 
$$
in particular we obtain the following crucial asymptotic formulas,
$$
\Phi \Psi_{t} (j_{\leq n}(\exp(a_i))) = P^{(i)}_t + \mathcal{O}(t^{n+1})
$$
as $ t \to 0 $ and $ i=0,\ldots,d $ again due to Theorem \ref{theo:taylor}.

To be more precise on the goal of our paper, Theorem \ref{theo:taylor} also means that if we approximated $ p $ by linear combinations of $ {(q^{[\theta]})}^n $ up to a certain degree $ 2m-1 $ within the algebra $ \mathbf{R} \langle A \rangle $ such that the remainder term is of order $ \mathcal{O}(\frac{1}{n^{2m}}) $, then $ P_tf(x) $ could be approximated by linear combinations of $ \Phi  (\Psi_t({(q^{[\theta]})}^n) )f(x) $ in a weak sense of order $ 2m$.

Notice that the letters $ a_i $ correspond to squares of vector fields under $ \Phi \circ \Psi_t $, hence one has to work out the correspondence to exponentials of first order terms, too. The next lemma shows how to relate thoes linear semi-flows of PDEs $ P^{(i)}_t $ to non-linear flows of ODEs $ \operatorname{Fl}^{V_i}_t (x) $ up to a certain degree $ m $, namely by replacing the normal random variable $ Z $ by a random variable taking finitely many values and sharing moments up to order $ 2m $. This finally means that we can approximate $ q^{[\theta]} $ by convex combinations of exponentials of first degree terms, i.e.~$a_0,\ldots,a_d$ leading to weak approximation schemes.

\begin{lem}
For all $ i \in \{1,\dots,d\} $ we have that
\begin{equation}
E[\exp{(B^i_t v_i)}] = \exp{(t\frac{v_i^2}{2})}
\end{equation}
holds true. This formula also holds true under the homomorphism $ \Phi \circ \Psi_1 $, i.e.,
$$
E[f(\operatorname{Fl}^{V_i}_{B^i_t}(x))] = P^{(i)}_t f (x)
$$
for test functions $ f $ and $ x \in \R^N $.
\end{lem}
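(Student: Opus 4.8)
The plan is to handle the two assertions of the lemma separately. The first is a purely algebraic statement in $\mathbf{R}\langle\langle B\rangle\rangle$ that amounts to computing the moments of the scalar Gaussian $B^i_t$; the second is the analytic translation, which I would obtain not by pushing the formal series through $\Phi$ (this only reproduces the stochastic Taylor coefficients of both sides, as in Theorem \ref{theo:taylor}) but from the classical fact that a Stratonovich equation driven by a single scalar Brownian motion is solved pathwise by the flow of the driving vector field.

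For the first part, I would first check that $\exp(B^i_t v_i) = \sum_{n\ge 0}(B^i_t)^n v_i^n/n!$ is a bona fide $\mathbf{R}\langle\langle B\rangle\rangle$--valued random variable: for each $\omega$ its support lies in $\{1,v_i,v_i^2,\dots\}$, which is well ordered for the lexicographic order, and the $v_i^n$--coordinate is the measurable map $\omega\mapsto (B^i_t(\omega))^n/n!$, so measurability for $\mathcal{B}(\mathbf{R}\langle\langle B\rangle\rangle)$ follows from the product structure $\mathbf{R}\langle\langle B\rangle\rangle=\mathbf{R}^{B^*}$. Because all Gaussian moments $E[|B^i_t|^n]$ are finite, the coordinatewise expectation is well defined, with $v_i^n$--coordinate $E[(B^i_t)^n]/n!$; no interchange of a limit with an integral beyond this coordinatewise remark is needed. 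Using that the odd moments of $B^i_t$ vanish and $E[(B^i_t)^{2k}]=(2k)!\,t^k/(2^k k!)$, one gets
\begin{equation*}
E[\exp(B^i_t v_i)] = \sum_{n\ge 0}\frac{E[(B^i_t)^n]}{n!}\,v_i^n = \sum_{k\ge 0}\frac{t^k}{2^k\,k!}\,v_i^{2k} = \exp\Bigl(\tfrac{t}{2}v_i^2\Bigr),
\end{equation*}
which is the first claim; since $\Psi_1(a_i)=v_i^2/2$, the right-hand side also equals $\Psi_t(\exp(a_i))$, matching the formal picture behind Theorem \ref{theo:taylor}.

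For the second part I would argue directly on path space. Setting $Y_t:=\operatorname{Fl}^{V_i}_{B^i_t}(x)$ and using that $s\mapsto\operatorname{Fl}^{V_i}_s(x)$ is the smooth solution of $\tfrac{d}{ds}\operatorname{Fl}^{V_i}_s(x)=V_i(\operatorname{Fl}^{V_i}_s(x))$ with $\operatorname{Fl}^{V_i}_0(x)=x$, the Stratonovich change-of-variables formula applied to the composition with $t\mapsto B^i_t$ yields $dY_t=V_i(Y_t)\circ dB^i_t$, $Y_0=x$; pathwise uniqueness for \eqref{simple_sde} then gives $Y_t=X^{(i)}(t,x)$ almost surely, hence $E[f(\operatorname{Fl}^{V_i}_{B^i_t}(x))]=E[f(X^{(i)}(t,x))]=P^{(i)}_t f(x)$, which is exactly the image of the algebraic identity under $\Phi\circ\Psi_1$. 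I expect the only genuinely delicate point to be this last reconciliation: the identity in $\mathbf{R}\langle\langle B\rangle\rangle$ only encodes the stochastic Taylor coefficients, and for general $f\in C_b^\infty(\mathbf{R}^N)$ the formal operator series $\exp(sV_i)f$ need not converge to $f\circ\operatorname{Fl}^{V_i}_s$, so the exact equality of the two semigroup expressions must be read off from the pathwise ODE representation rather than from the formal algebra; everything else is the routine Gaussian moment bookkeeping above.
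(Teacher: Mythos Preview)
Your argument is correct. For the first assertion your coordinatewise Gaussian moment computation is precisely the content of the paper's one-line appeal to ``the Fourier transform of Brownian motion'': the identity $E[e^{\lambda B^i_t}]=e^{t\lambda^2/2}$, read off coefficient by coefficient after substituting the formal letter $v_i$ for $\lambda$, is exactly what you write out. For the second assertion the routes differ. The paper invokes ``classical subordination results,'' i.e.\ the semigroup-theoretic fact that randomizing the time parameter of the deterministic flow $(\operatorname{Fl}^{V_i}_s)_{s\in\mathbf{R}}$ by $B^i_t$ produces a Markov semigroup with generator $\tfrac12 V_i^2$, which is $P^{(i)}_t$. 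You instead give the pathwise Doss--Sussmann argument: $Y_t=\operatorname{Fl}^{V_i}_{B^i_t}(x)$ solves the Stratonovich equation \eqref{simple_sde} by the chain rule, and pathwise uniqueness identifies it with $X^{(i)}(t,x)$. Your approach is more self-contained and, as you correctly note, makes explicit that the exact analytic identity comes from the pathwise ODE representation rather than from any convergence of the formal operator series $\exp(sV_i)f$; the subordination viewpoint hides this inside a black-box semigroup statement. Both arguments are standard and equally valid here.
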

\begin{proof}
Proof by applying the Fourier transform of Brownian motion and classical subordination results.
\end{proof}

\section{How to approximate $p$ by $q$?}

An alternative proof of this result can be found in the appendix:

\begin{lem}[ {\cite{Fujiwara}} Lemma 2.1]\label{lem:fujiwara}
We have
\begin{equation}\label{eq:expansion}
\log{\overleftarrow{q^{[1]}}} = \sum_{i=1}^\infty{(-1)^{i+1}j_i(\log{\overrightarrow{q^{[1]}}})}. 
\end{equation}
\end{lem}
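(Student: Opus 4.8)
The plan is to realize $\overleftarrow{q^{[1]}}$ as the image of $\overrightarrow{q^{[1]}}$ under the \emph{reversal} anti-automorphism and then exploit the fact that $\log \overrightarrow{q^{[1]}}$ is a Lie series. Introduce $\tau\colon \mathbf{R}\langle\langle A\rangle\rangle \to \mathbf{R}\langle\langle A\rangle\rangle$ defined on words by $\tau(a_{i_1}a_{i_2}\cdots a_{i_k}) := a_{i_k}\cdots a_{i_2}a_{i_1}$ and extended $\mathbf{R}$-linearly; since this merely permutes the coordinates of $\mathbf{R}^{A^*}$, the map $\tau$ is a homeomorphism for the product topology, an involution, and an anti-homomorphism, $\tau(uv) = \tau(v)\tau(u)$, with $\tau(a_i) = a_i$. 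From the series definitions and continuity of $\tau$ one gets $\tau(\exp u) = \exp(\tau u)$ and $\tau(\log(1+u)) = \log(1+\tau u)$ whenever $u$ has vanishing constant term. Hence
$$
\tau\!\left(\overrightarrow{q^{[1]}}\right) = \tau\!\left(\exp(a_0)\cdots\exp(a_d)\right) = \exp(a_d)\cdots\exp(a_0) = \overleftarrow{q^{[1]}},
$$
and consequently $\log\overleftarrow{q^{[1]}} = \tau\bigl(\log\overrightarrow{q^{[1]}}\bigr)$.

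Next I would invoke the Baker--Campbell--Hausdorff theorem -- equivalently, that each $\exp(a_i)$ is group-like, products of group-like elements are group-like, and logarithms of group-like elements are primitive, primitive elements of the concatenation bialgebra being exactly the (completed) free Lie algebra $\mathscr{L}$ on $a_0,\dots,a_d$ -- to conclude that $\log\overrightarrow{q^{[1]}}$ lies in the completion of $\mathscr{L}$. Writing $L_i := j_i\bigl(\log\overrightarrow{q^{[1]}}\bigr)$, each $L_i$ is then a homogeneous Lie element of degree $i$, and $\log\overrightarrow{q^{[1]}} = \sum_{i\ge 1} L_i$.

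The heart of the argument is the claim that $\tau$ acts on the degree-$i$ homogeneous component $\mathscr{L}_i$ of the free Lie algebra as multiplication by $(-1)^{i+1}$. This follows by induction on $i$: on $\mathscr{L}_1$, spanned by the letters $a_j$, the map $\tau$ is the identity $=(-1)^{1+1}\mathrm{id}$; and if $x,y$ are homogeneous Lie elements of degrees $p,q$ with $\tau x = (-1)^{p+1}x$, $\tau y=(-1)^{q+1}y$, then, since $\tau$ is an anti-automorphism,
$$
\tau([x,y]) = \tau(y)\tau(x)-\tau(x)\tau(y) = -[\tau x,\tau y] = (-1)^{p+q+1}[x,y],
$$
which settles $\mathscr{L}_{p+q}$ because it is spanned by such brackets. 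Combining this with the first step and using continuity of $\tau$ to pull it inside the sum,
$$
\log\overleftarrow{q^{[1]}} = \tau\!\left(\sum_{i\ge1}L_i\right) = \sum_{i\ge1}(-1)^{i+1}L_i = \sum_{i\ge1}(-1)^{i+1}j_i\bigl(\log\overrightarrow{q^{[1]}}\bigr),
$$
which is precisely \eqref{eq:expansion}.

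The main obstacle is the second step: one must make rigorous that the logarithm of a finite product of exponentials of letters is a Lie series, i.e.\ an (in general infinite) linear combination of iterated brackets of the $a_i$ in the completed algebra. I expect to handle this either by iterating the classical two-variable BCH formula $d$ times, or via Friedrichs' criterion for primitivity in the concatenation bialgebra; the sign computation in the third step is then routine, but the induction must be phrased carefully so that it runs over an honest spanning set of $\mathscr{L}_i$ by brackets of lower-degree Lie elements.
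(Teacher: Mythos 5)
Your argument is correct, and it takes a genuinely different route from the paper. The paper works directly with the iterated Baker--Campbell--Hausdorff expansion: it introduces the recursively defined BCH coefficients $c_n(X,Y)$ from Varadarajan, proves the antisymmetry $c_n(X,Y)=(-1)^{n+1}c_n(Y,X)$ by induction on $n$ through the recursion, and then runs a rather intricate double induction (on the number of letters $d$ and on the BCH index $k$) to push the signs through the expansion of the ordered product $\exp(a_0)\cdots\exp(a_d)$, keeping track of the homogeneous pieces $\tau_{i,d}=j_i(\log\overrightarrow{q^{[1]}})$ at every stage. You instead package the whole combinatorial content into the single reversal anti-automorphism: the identities $\tau(\overrightarrow{q^{[1]}})=\overleftarrow{q^{[1]}}$, $\tau\circ\log=\log\circ\tau$, and the classical fact that reversal acts on the degree-$i$ component of the free Lie algebra by $(-1)^{i+1}$, which your bracket induction establishes cleanly (each homogeneous component being spanned by brackets of lower-degree homogeneous Lie elements, since the free Lie algebra is generated in degree one). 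The one ingredient you correctly flag as needing justification --- that $\log\overrightarrow{q^{[1]}}$ is a Lie series --- is standard (Friedrichs' criterion, or iterated BCH, e.g.\ Reutenauer or Bourbaki), and the paper implicitly relies on the same input when it invokes the BCH formula. What your approach buys is brevity and conceptual clarity: the paper's Lemma on $c_n$ is essentially the two-letter instance of your reversal fact, and your argument avoids the nested inductions entirely. What the paper's approach buys is explicit, quantitative control over the individual BCH terms in the ordered product, which is in the spirit of the rest of their computations, and independence from the structure theory of primitive elements. Both proofs are valid; yours is the shorter and more transparent one.
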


\begin{prop}[\cite{Fujiwara} Proposition 2.2]\label{prop:fujiwara}
There exists $c_i \in \mathbf{R}\langle \langle A \rangle \rangle_{\ge 2i+1} $ such that for all $ \theta \in \mathbf{N} $,
\begin{equation*}
q^{[\theta]} = p + \sum_{i=1}^\infty\frac{c_i}{\theta^{2i}},
\end{equation*}
holds.
\end{prop}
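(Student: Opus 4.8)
The plan is to realise $\overrightarrow{q^{[\theta]}}$ and $\overleftarrow{q^{[\theta]}}$ as the values at $x=1/\theta$ and $x=-1/\theta$ of one and the same formal power series in an auxiliary variable $x$, and then to observe that forming the average $q^{[\theta]}=\frac12(\overrightarrow{q^{[\theta]}}+\overleftarrow{q^{[\theta]}})$ annihilates the odd powers of $x$. Concretely, first I would introduce the continuous $\mathbf{R}$-algebra automorphism $\delta_\lambda$ of $\mathbf{R}\langle\langle A\rangle\rangle$ determined by $\delta_\lambda(a_i)=\lambda a_i$, equivalently the map multiplying $\mathbf{R}\langle\langle A\rangle\rangle_k$ by $\lambda^k$. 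Since $\delta_\lambda$ is a continuous homomorphism it commutes with $\exp$ and $\log$, and since $\exp(v)$ commutes with itself we have $\exp(v)^\theta=\exp(\theta v)$; together with $\delta_{1/\theta}(\exp(a_i))=\exp(a_i/\theta)$ this gives $\overrightarrow{q^{[\theta]}}=\delta_{1/\theta}\bigl((\overrightarrow{q^{[1]}})^\theta\bigr)$, hence, writing $L:=\log\overrightarrow{q^{[1]}}=\sum_{k\ge1}L_k$ with $L_k:=j_k(L)$ and $L_1=a_0+\cdots+a_d$,
\[ \log\overrightarrow{q^{[\theta]}}=\theta\,\delta_{1/\theta}(L)=\sum_{k\ge1}\theta^{-(k-1)}L_k . \]
The identical computation for the reversed product, now invoking Lemma \ref{lem:fujiwara} in the form $j_k(\log\overleftarrow{q^{[1]}})=(-1)^{k+1}L_k$, yields
\[ \log\overleftarrow{q^{[\theta]}}=\sum_{k\ge1}(-1)^{k+1}\theta^{-(k-1)}L_k=\sum_{k\ge1}(-1/\theta)^{k-1}L_k . \]

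Next I would set $F(x):=\exp\!\bigl(\sum_{k\ge1}x^{k-1}L_k\bigr)=\sum_{n\ge0}x^n g_n$, a power series in $x$ with coefficients $g_n\in\mathbf{R}\langle\langle A\rangle\rangle$; because $\deg L_k=k$, within any fixed degree only finitely many of the monomials $L_{k_1}\cdots L_{k_m}$ produced by expanding the exponential contribute, so the rearrangement turning the doubly-indexed series into a power series in $x$ is legitimate and substituting $x=\pm1/\theta$ recovers $\overrightarrow{q^{[\theta]}}=F(1/\theta)$ and $\overleftarrow{q^{[\theta]}}=F(-1/\theta)$. Therefore
\[ q^{[\theta]}=\tfrac12\bigl(F(1/\theta)+F(-1/\theta)\bigr)=g_0+\sum_{i\ge1}\theta^{-2i}g_{2i}, \]
the odd powers of $1/\theta$ cancelling. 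It remains to identify $g_0=F(0)=\exp(L_1)=\exp(a_0+\cdots+a_d)=p$ and to establish the degree bound: any monomial contributing to $g_n$ with $n\ge1$ comes from a product $L_{k_1}\cdots L_{k_m}$ with $m\ge1$ and $(k_1-1)+\cdots+(k_m-1)=n$, which is homogeneous of degree $k_1+\cdots+k_m=n+m\ge n+1$; hence $g_n\in\mathbf{R}\langle\langle A\rangle\rangle_{\ge n+1}$. Setting $c_i:=g_{2i}\in\mathbf{R}\langle\langle A\rangle\rangle_{\ge2i+1}$ then gives $q^{[\theta]}=p+\sum_{i\ge1}\theta^{-2i}c_i$, the series converging in $\mathbf{R}\langle\langle A\rangle\rangle$ since $\deg(\inf\mathrm{supp}(c_i))\ge2i+1\to\infty$.

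The only genuinely delicate point is the bookkeeping in the merely filtered (not graded) algebra $\mathbf{R}\langle\langle A\rangle\rangle$: one has to verify that $\delta_\lambda$, $\exp$, $\log$, the power $u\mapsto u^\theta$, and the formal substitution $x\mapsto\pm1/\theta$ all interact as asserted, i.e.\ that every rearrangement of an infinite sum used above is justified by the well-ordered-support property (only finitely many summands in each degree). Granting this, no further input on free Lie algebras or the reversal anti-automorphism is needed beyond what Lemma \ref{lem:fujiwara} already provides — the evenness of the expansion of $q^{[\theta]}-p$ is exactly the antisymmetry under $x\mapsto-x$ between the higher-degree parts of $\log\overrightarrow{q^{[1]}}$ and $\log\overleftarrow{q^{[1]}}$.
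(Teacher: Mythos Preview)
Your argument is correct. The paper itself does not supply a proof of this proposition---it is simply quoted from \cite{Fujiwara}---so there is nothing in the paper to compare your approach against; your use of the dilation $\delta_{1/\theta}$ together with Lemma~\ref{lem:fujiwara} to exhibit $\overrightarrow{q^{[\theta]}}$ and $\overleftarrow{q^{[\theta]}}$ as $F(1/\theta)$ and $F(-1/\theta)$ for a single series $F(x)=\exp\bigl(\sum_{k\ge1}x^{k-1}L_k\bigr)$ is a clean way to see both the evenness in $1/\theta$ and the degree bound $c_i\in\mathbf{R}\langle\langle A\rangle\rangle_{\ge 2i+1}$ at once.

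One small remark on the bookkeeping you flag: the potentially delicate step is that the constant term (in $x$) of $M(x)=\sum_{k\ge1}x^{k-1}L_k$ is $L_1\neq0$, so infinitely many powers $M(x)^m$ contribute to each fixed power of $x$; what saves the day is exactly what you use, namely that $L_k$ is homogeneous of degree $k$, so in any fixed $A$-degree $d$ only the terms with $m\le d$ survive, and the substitution $x\mapsto\pm1/\theta$ is then legitimate because $g_n\in\mathbf{R}\langle\langle A\rangle\rangle_{\ge n+1}$ for $n\ge1$. You have this right; it is worth stating it once explicitly rather than leaving it as a caveat at the end.
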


\begin{cor}
Let $ q $ be a linear combination of $ q^{[\theta]}$ for some $\theta\in\mathbf{N}$. If there exists $ n \in \mathbf{N} $ such that $ j_{\le 2n-1}(q) = j_{\le 2n-1}(p) $, then  $ j_{\le 2n}(q) = j_{\le 2n}(p) $.
\end{cor}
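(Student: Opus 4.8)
The plan is to read the statement off Proposition \ref{prop:fujiwara}, after sharpening it slightly. Write the linear combination as a finite sum $q=\sum_{k}\lambda_k q^{[\theta_k]}$. Since $p$ and every $q^{[\theta]}$ have constant term $1$, the degree-$0$ part of the hypothesis (valid because $2n-1\ge 1$) already forces $\sum_k\lambda_k=1$, so substituting $q^{[\theta]}=p+\sum_{i\ge1}\theta^{-2i}c_i$ from Proposition \ref{prop:fujiwara} collapses the constant-term contributions and leaves
\[
q-p=\sum_{i\ge1}\mu_i c_i,\qquad \mu_i:=\sum_k\lambda_k\theta_k^{-2i}.
\]
Because $c_i\in\mathbf{R}\langle\langle A\rangle\rangle_{\ge 2i+1}$, only $c_1,\dots,c_{n-1}$ can contribute in word-degrees $\le 2n$, so both $j_{\le 2n-1}(q-p)$ and $j_{2n}(q-p)$ are determined by $\mu_1,\dots,\mu_{n-1}$ alone.

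Next I would pin down the leading homogeneous component of each $c_i$. Setting $L_j:=j_j(\log\overrightarrow{q^{[1]}})$ and using the grading automorphism $a_\ell\mapsto t a_\ell$ together with $\overrightarrow{q^{[\theta]}}=\delta_{1/\theta}\bigl((\overrightarrow{q^{[1]}})^{\theta}\bigr)$ one gets $\log\overrightarrow{q^{[\theta]}}=\sum_{j\ge1}\theta^{1-j}L_j$, while Lemma \ref{lem:fujiwara} scaled the same way gives $\log\overleftarrow{q^{[\theta]}}=\sum_{j\ge1}(-1)^{j+1}\theta^{1-j}L_j$; hence, with $s=1/\theta$,
\[
q^{[\theta]}=\tfrac12\bigl(\exp G(s)+\exp G(-s)\bigr),\qquad G(s):=\sum_{j\ge1}s^{j-1}L_j,
\]
an even power series in $s$ whose $s^{2i}$-coefficient is precisely $c_i$ and whose word-degree-$(2i+1)$ part is contributed solely by the length-one term $s^{2i}L_{2i+1}$ of $\exp G$ (any factor $L_{i_1}\cdots L_{i_m}$ with $m\ge2$ has word-degree $\ge 2i+2$). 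Thus $j_{2i+1}(c_i)=L_{2i+1}$. To see $L_{2i+1}\ne0$ I would apply the homomorphism killing $a_2,\dots,a_d$, reducing to $\log(e^{a_0}e^{a_1})$, whose part linear in $a_1$ in word-degree $2i+1$ is $\tfrac{B_{2i}}{(2i)!}(\operatorname{ad}_{a_0})^{2i}a_1\ne0$ since the Bernoulli number $B_{2i}\ne0$; the case $d=0$ is trivial because then $L_j=0$ for $j\ge2$ and $q=p$.

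The conclusion is then a triangular elimination. The hypothesis says $j_d(q-p)=0$ for all $d\le 2n-1$; reading this off in the odd degrees $d=3,5,\dots,2n-1$ gives, for $i=1,\dots,n-1$,
\[
\sum_{i'=1}^{i}\mu_{i'}\,j_{2i+1}(c_{i'})=0,
\]
a lower-triangular linear system in $(\mu_1,\dots,\mu_{n-1})$ whose diagonal entries $j_{2i+1}(c_i)=L_{2i+1}$ are nonzero. Forward substitution forces $\mu_1=\dots=\mu_{n-1}=0$, whence $j_{2n}(q-p)=\sum_{i=1}^{n-1}\mu_i\,j_{2n}(c_i)=0$, and combined with the hypothesis this is exactly $j_{\le 2n}(q)=j_{\le 2n}(p)$.

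The routine parts are the bookkeeping with homogeneous components and the collapse $q-p=\sum_i\mu_i c_i$. The real obstacle is the sharpening of Proposition \ref{prop:fujiwara}: that proposition only asserts $c_i\in\mathbf{R}\langle\langle A\rangle\rangle_{\ge 2i+1}$, whereas the triangular argument genuinely needs $j_{2i+1}(c_i)\ne0$, i.e. that $c_i$ sits in degree exactly $2i+1$ and not higher (otherwise the system degenerates and the statement could fail). Establishing $j_{2i+1}(c_i)=L_{2i+1}$ and the non-termination $L_{2i+1}\ne0$ of the Baker--Campbell--Hausdorff series in every odd degree — which I would extract from the Bernoulli-number formula after projecting to two letters — is where the content of the proof lies.
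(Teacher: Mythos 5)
Your argument is correct, and its skeleton coincides with the paper's: both normalize $\sum_k\lambda_k=1$, use Proposition \ref{prop:fujiwara} to collapse $q-p=\sum_{i\ge 1}\mu_i c_i$ with $\mu_i=\sum_k\lambda_k\theta_k^{-2i}$, deduce $\mu_1=\dots=\mu_{n-1}=0$ from the hypothesis, and conclude since $c_i\in\mathbf{R}\langle\langle A\rangle\rangle_{\ge 2i+1}$ for $i\ge n$. The difference is that the paper simply asserts the implication ``$j_{\le 2n-1}(q-p)=0\Rightarrow \mu_i=0$ for $i\le n-1$,'' and you are right that this does not follow from the stated form of Proposition \ref{prop:fujiwara} alone: if some $c_i$ vanished in its nominal leading degree $2i+1$ but not in degree $2i+2$, the conclusion could fail. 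Your sharpening $j_{2i+1}(c_i)=L_{2i+1}=j_{2i+1}(\log\overrightarrow{q^{[1]}})$ --- obtained from the scaling $\log\overrightarrow{q^{[\theta]}}=\sum_j\theta^{1-j}L_j$, Lemma \ref{lem:fujiwara}, and the evenness of $q^{[\theta]}$ in $s=1/\theta$ --- together with $L_{2i+1}\neq 0$ via the coefficient $\tfrac{B_{2i}}{(2i)!}(\operatorname{ad}_{a_0})^{2i}a_1$ after projecting onto the two letters $a_0,a_1$ (with $d=0$ treated separately as trivial), is exactly the nondegeneracy needed to make the triangular elimination in the odd degrees legitimate. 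So your proof is a strictly more complete version of the one in the paper; the only cosmetic remark is that for $n=1$ the elimination is vacuous and the conclusion already follows from $c_i\in\mathbf{R}\langle\langle A\rangle\rangle_{\ge 3}$, as you implicitly use.
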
 
\begin{proof}
For all $ \theta \in \mathbf{N} $, $ j_{\le 2}(q^{[\theta]}) = j_{\le 2}(p) $ holds. Hence, the case $ n=1 $ is clear. Suppose $n\geq 2$ and $ j_{\le 2n-1}(q) = j_{\le 2n-1}(p) $. Since $q=\sum_{j=1}^k \alpha_j q^{[\theta_j]}$ for some $\theta_i\in\mathbf{N}$ and since $ j_{\le 2}(q^{[\theta]}) = j_{\le 2}(p) $ for all $\theta \in \mathbf{N} $, it follows $\sum_{j=1}^k \alpha_j=1$. According to Proposition \ref{prop:fujiwara} 
\[
 	q=p+\sum_{i=1}^\infty c_i (\sum_{j=1}^k \alpha_j \frac{1}{\theta_j^{2i}})
\]
for some $c_i\in \mathbf{R}\langle\langle A\rangle\rangle_{\geq 2i+1}$. Since $ j_{\le 2n-1}(q) = j_{\le 2n-1}(p) $, we have 
\[
	\sum_{j=1}^k \alpha_j \frac{1}{\theta_j^{2i}}=0
\]
for all $i=1,\dots, n-1$. Then
\[
	q-p=\sum_{i=n}^\infty c_i (\sum_{j=1}^k \alpha_j \frac{1}{\theta_j^{2i}}),\quad \text{ where } c_n\in \mathbf{R}\langle\langle A\rangle\rangle_{\geq 2n+1}, 
\]
which proves the corollary.
\end{proof}

Set 
\begin{equation*}
A := \begin{bmatrix}1 & \cdots & 1 \\
                    1/\theta_1^2 & \cdots & 1/\theta_m^2 \\
                    \vdots & \ddots & \vdots \\
                    1/\theta_1^{2(m-1)} & \cdots & 1/\theta_m^{2(m-1)}
      \end{bmatrix}.
\end{equation*}

\begin{cor} \label{cor:order}
\begin{equation}
j_{\le 2m}\left( \left(A^{-1}\begin{bmatrix}1 \\ 0 \\ \vdots \\ 0 \end{bmatrix}\right)^T \begin{bmatrix} q^{[\theta_1]}-p \\ \vdots \\ q^{[\theta_m]}-p \end{bmatrix} \right) =0
\end{equation}
holds.
\end{cor}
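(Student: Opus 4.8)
The plan is to read off what the vector $\alpha := A^{-1}(1,0,\dots,0)^T$ encodes, substitute it into the expansion of $q^{[\theta]}$ provided by Proposition \ref{prop:fujiwara}, and then either conclude directly or close the argument with the preceding corollary. First I would note that $A$ is, up to transposition, a Vandermonde matrix in the variables $1/\theta_1^2,\dots,1/\theta_m^2$, hence invertible precisely because the $\theta_j$ are pairwise distinct. Writing $\alpha=(\alpha_1,\dots,\alpha_m)^T$, the identity $A\alpha=(1,0,\dots,0)^T$ says exactly that
$$
\sum_{j=1}^m \alpha_j = 1, \qquad \sum_{j=1}^m \frac{\alpha_j}{\theta_j^{2i}} = 0 \quad\text{for } i=1,\dots,m-1.
$$

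Next I would plug in Proposition \ref{prop:fujiwara}, i.e.\ $q^{[\theta_j]}=p+\sum_{i\ge 1}c_i/\theta_j^{2i}$ with $c_i\in\mathbf{R}\langle\langle A\rangle\rangle_{\ge 2i+1}$ \emph{independent of $j$}. Since $\deg(\inf(\mathrm{supp}(c_i)))\ge 2i+1\to\infty$, the resulting double series converges in $\mathbf{R}\langle\langle A\rangle\rangle$ and may be reordered, giving
$$
\sum_{j=1}^m \alpha_j\bigl(q^{[\theta_j]}-p\bigr) = \sum_{i\ge 1} c_i\,\Bigl(\sum_{j=1}^m \frac{\alpha_j}{\theta_j^{2i}}\Bigr).
$$
The coefficient multiplying $c_i$ vanishes for $1\le i\le m-1$ by the relations above, so the right-hand side equals $\sum_{i\ge m} c_i\bigl(\sum_j \alpha_j/\theta_j^{2i}\bigr)$, each summand of which lies in $\mathbf{R}\langle\langle A\rangle\rangle_{\ge 2m+1}$; applying $j_{\le 2m}$ therefore annihilates it, which is the assertion.

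Alternatively — and this is presumably the route the authors have in mind, given that the previous corollary has just been stated — one runs the same computation with $j_{\le 2m-1}$ in place of $j_{\le 2m}$: using $\sum_j\alpha_j=1$ to absorb the $p$-terms, one gets $j_{\le 2m-1}\bigl(\sum_j \alpha_j q^{[\theta_j]}\bigr)=j_{\le 2m-1}(p)$, and since $\sum_j\alpha_j q^{[\theta_j]}$ is a linear combination of the $q^{[\theta_j]}$, the preceding corollary with $n=m$ upgrades this to $j_{\le 2m}\bigl(\sum_j\alpha_j q^{[\theta_j]}\bigr)=j_{\le 2m}(p)$; subtracting $p=p\sum_j\alpha_j$ yields the claim. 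I do not expect a genuine obstacle here: the statement is essentially bookkeeping on the orders in Proposition \ref{prop:fujiwara}. The only point deserving an explicit word is the commutation of the truncation $j_{\le 2m}$ with the infinite sum over $i$, which is immediate from the fact that the minimal degrees of the $c_i$ tend to infinity, so that only finitely many $c_i$ contribute below any fixed degree.
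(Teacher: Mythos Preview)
Your proposal is correct, and it matches the reasoning the paper intends: the paper states this corollary without an explicit proof, immediately after introducing the Vandermonde-type matrix $A$ and Proposition~\ref{prop:fujiwara}, so the intended argument is precisely your direct substitution of $q^{[\theta_j]}-p=\sum_{i\ge 1}c_i/\theta_j^{2i}$ together with the relations encoded in $A\alpha=(1,0,\dots,0)^T$. Your alternative route via the preceding corollary is also valid and is in the same spirit as that corollary's proof; either way the result is, as you say, bookkeeping on the orders from Proposition~\ref{prop:fujiwara}.
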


\begin{cor} \label{cor:key}
For all $ l \in \{1,\ldots,m-1\} $,
\begin{equation}
\left(A^{-1} \begin{bmatrix} 1 \\ 0 \\ \vdots \\ 0 \end{bmatrix}\right)^T\begin{bmatrix} \frac{1}{\theta_1^{2l}} \\ \vdots \\ \frac{1}{\theta_m^{2l}} \end{bmatrix}=0
\end{equation}
\end{cor}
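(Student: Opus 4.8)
The plan is to read the identity off directly from the structure of the matrix $A$, with essentially no computation beyond bookkeeping of indices. First I would record that $A$ is invertible, so that the statement even makes sense: $A$ is a Vandermonde matrix in the nodes $1/\theta_1^2,\ldots,1/\theta_m^2$, which are pairwise distinct because the $\theta_i$ are distinct positive integers, hence $\det A \ne 0$ and $A^{-1}$ exists.

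The key observation is that the column vector on the right, $w_l := \bigl(1/\theta_1^{2l},\ldots,1/\theta_m^{2l}\bigr)^T$, is precisely the transpose of the $(l+1)$-st row of $A$ for $l \in \{0,1,\ldots,m-1\}$: the rows of $A$ are indexed by the exponents $0,2,\ldots,2(m-1)$, so the row carrying exponent $2l$ is the $(l+1)$-st one. Equivalently, the matrix whose columns are $w_0,w_1,\ldots,w_{m-1}$ (in that order) is exactly $A^T$. Writing $c := A^{-1}e_1$ for the first column of $A^{-1}$, where $e_1 = (1,0,\ldots,0)^T$, we then have
\[
  c^T\,[\,w_0 \mid w_1 \mid \cdots \mid w_{m-1}\,] \;=\; c^T A^T \;=\; (A c)^T \;=\; \bigl(A A^{-1} e_1\bigr)^T \;=\; e_1^T \;=\; (1,0,\ldots,0).
\]
Reading off the $(l+1)$-st entry gives $c^T w_l = \delta_{l,0}$, which vanishes for every $l \in \{1,\ldots,m-1\}$; this is exactly the claimed identity. (As a byproduct, the case $l=0$ records $c^T w_0 = 1$, which is what feeds into Corollary \ref{cor:order}.)

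I do not anticipate any genuine obstacle here: the only points that need care are the invertibility of $A$, which is handled by its Vandermonde structure, and keeping the $0$-based exponent indexing of the rows of $A$ consistent with the $1$-based indexing of the standard basis vectors when identifying $w_l$ with a row of $A$. Everything else is a two-line linear-algebra manipulation, and the argument is uniform in the choice of the $\theta_i$.
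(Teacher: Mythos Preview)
Your argument is correct and is exactly the natural reading of why the paper states this as an immediate corollary without proof: the vector $(1/\theta_1^{2l},\ldots,1/\theta_m^{2l})$ is the $(l+1)$-st row of $A$, so the claimed inner product is simply the $(l+1)$-st entry of $A\,A^{-1}e_1 = e_1$, which vanishes for $l\in\{1,\ldots,m-1\}$. There is nothing to add; the paper's omission of a proof reflects precisely this two-line linear-algebra observation.
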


\section{Generalized Fujiwara scheme and its property}\label{sec:scheme}

\begin{defi}[Generalized Fujiwara scheme]
A family of series,
\begin{equation}
\big\{q_n:=\sum_{i=1}^mf_{\theta_i}(q^{[\theta_i]})^n \big\}_{n\in\mathbf{N}}
\end{equation}
is called a \textit{generalized Fujiwara scheme} of order $2m$ if 
\begin{align*}
f &= \begin{bmatrix}f_{\theta_1} & \cdots & f_{\theta_m} \end{bmatrix}^T \\
  &=A^{-1} \begin{bmatrix} 1 \\ 0 \\ \vdots \\ 0 \end{bmatrix},
\end{align*}
holds. 
\end{defi}

A straightforward calculation involving induction gives the following connection concerning the powers of series in $\mathbf{R}\langle \langle A \rangle \rangle $. Notice that we split the product $ q^n - p^n $ into telescoping summands, where one, two up to $ m $ terms of the form $ q-p $ appear. 
\begin{prop}\label{prop:decomposition}
For $p,q \in \mathbf{R}\langle \langle A \rangle \rangle $ and $ 2 \le m \le n $, we have
\begin{align*}
&q^n - p^n  = \\
&=\sum_{k=0}^{n-1}p^k(q-p)p^{n-k-1} +\\
+ \sum_{l=2}^{m-1}&\big( \sum_{k_l=l-1}^{n-1}\sum_{k_{l-1}=l-2}^{k_l-1}\cdots \sum_{k_2=1}^{k_3-1}\sum_{k_{1}=0}^{k_2-1} p^{k_1}(q-p)p^{(k_2-k_1-1)}(q-p) \times \cdots \\ 
&\times p^{(k_l-k_{l-1}-1)}(q-p)p^{n-k_l-1} \bigr) + \\
+ \sum_{k_m=m-1}^{n-1}&\sum_{k_{m-1}=m-2}^{k_m-1}\cdots \sum_{k_2=1}^{k_3-1}\sum_{k_{1}=0}^{k_2-1} q^{k_1}(q-p)p^{k_2-k_1-1}(q-p) \times \cdots\\ 
&\times p^{k_m-k_{m-1}-1}(q-p)p^{n-k_m-1}. 
\end{align*}
In particular for $ m=1 $,
\begin{equation*}
q^n-p^n = \sum_{k=0}^{n-1}q^k(q-p)p^{n-k-1}
\end{equation*}
holds true.
\end{prop}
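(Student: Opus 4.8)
The plan is to prove the identity purely algebraically. Both sides are built from $p$ and $q$ using only ring operations, so it suffices to establish it with $p$ and $q$ regarded as two noncommuting indeterminates in a free unital ring; in particular, for fixed $n$ every term occurring on either side is a finite product, so no question of convergence of series in $\mathbf{R}\langle\langle A\rangle\rangle$ arises. The only tool needed is the elementary telescoping identity
\begin{equation*}
q^{N}-p^{N}=\sum_{k=0}^{N-1}q^{k}(q-p)p^{N-k-1}\qquad(N\ge 1),
\end{equation*}
which is exactly the $m=1$ case asserted in the proposition and follows at once from the cancellation $\sum_{k=0}^{N-1}\bigl(q^{k+1}p^{N-k-1}-q^{k}p^{N-k}\bigr)=q^{N}-p^{N}$.

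For $m\ge 2$ I would run an induction on a ``peeling parameter'' $l=1,\dots,m$. For $1\le l\le m$ let $S_{l}$ denote the $l$-th summand displayed in the proposition, the one with leading factor $p^{k_{1}}$ (so $S_{1}=\sum_{k=0}^{n-1}p^{k}(q-p)p^{n-k-1}$), and let $R_{l}$ denote the same $l$-fold nested sum but with leading factor $q^{k_{1}}$ instead of $p^{k_{1}}$ (so $R_{1}=\sum_{k=0}^{n-1}q^{k}(q-p)p^{n-k-1}$). The statement I would prove by induction on $l$ is
\begin{equation*}
q^{n}-p^{n}=\sum_{j=1}^{l-1}S_{j}+R_{l}.
\end{equation*}
The base case $l=1$ is precisely the telescoping identity with $N=n$, the sum over $j$ being empty. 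For the inductive step, in $R_{l}$ split the leading factor as $q^{k_{1}}=p^{k_{1}}+(q^{k_{1}}-p^{k_{1}})$: the $p^{k_{1}}$ part converts $R_{l}$ into $S_{l}$, while to the $(q^{k_{1}}-p^{k_{1}})$ part one applies the telescoping identity with $N=k_{1}$, introducing one more innermost summation index and one more factor $q-p$. After relabelling $k_{i}\mapsto k_{i+1}$ and naming the new innermost index $k_{1}$, the resulting expression is exactly $R_{l+1}$; here one checks that the lower summation limits $k_{i}\ge i-1$ appearing in $R_{l+1}$ are precisely the minimal values compatible with the strict chain $0\le k_{1}<k_{2}<\cdots<k_{l+1}\le n-1$, so the relabelled sum and $R_{l+1}$ range over the same set of index tuples. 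Iterating from $l=1$ up to $l=m$ yields $q^{n}-p^{n}=\sum_{j=1}^{m-1}S_{j}+R_{m}$, which is the claimed decomposition once $S_{1}$ is peeled off from $\sum_{j=1}^{m-1}S_{j}$; the hypothesis $m\le n$ is exactly what keeps the $m$-fold nested sum $R_{m}$ nonvacuous.

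The conceptual content is therefore just the telescoping identity used repeatedly, and the only point requiring care is the bookkeeping in the inductive step: checking that the $p$-exponents $k_{i+1}-k_{i}-1$ and $n-k_{l}-1$ stay nonnegative on the stated ranges, so that every word is legitimate, and that after each relabelling the summation bounds really coincide with those in the definition of $R_{l+1}$. I expect this index tracking to be the main --- indeed essentially the only --- obstacle. An alternative that avoids the nested-sum relabelling is to fix $m$ and induct on $n$ using $q^{n+1}-p^{n+1}=(q-p)p^{n}+q\,(q^{n}-p^{n})$ together with $q=p+(q-p)$, but the peeling argument keeps the combinatorics more transparent.
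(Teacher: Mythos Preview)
Your proof is correct and is essentially the argument the paper has in mind: the paper does not spell out a proof but only says ``a straightforward calculation involving induction'' and remarks that one ``split[s] the product $q^n-p^n$ into telescoping summands, where one, two up to $m$ terms of the form $q-p$ appear.'' Your peeling induction, repeatedly writing $q^{k_1}=p^{k_1}+(q^{k_1}-p^{k_1})$ and telescoping the difference, is exactly this; your observation that the lower limits $k_i\ge i-1$ are just the minimal values compatible with the strict chain $0\le k_1<\cdots<k_{l+1}\le n-1$ cleanly handles the only bookkeeping issue.
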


\begin{lem}
For $ z_1, z_2 \in \mathbf{R}\langle \langle A \rangle \rangle $, if $ j_{\le l}(z_1) =0 $ and $ j_{\le m}(z_2)=0 $, then $j_{\le l+m+1}(z_1z_2) =0 $.
\end{lem}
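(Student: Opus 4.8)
The plan is to prove this by a direct degree count on monomials, using the description of $\mathbf{R}\langle\langle A\rangle\rangle$ and its ideals recalled in Section 2. First I would unwind the hypotheses: since $j_{\le l}$ is the projection onto the degree-$\le l$ part, the assumption $j_{\le l}(z_1)=0$ says exactly that $z_1$ has no monomial of degree $\le l$ in its support, i.e.\ $z_1\in\mathbf{R}\langle\langle A\rangle\rangle_{>l}=\mathbf{R}\langle\langle A\rangle\rangle_{\ge l+1}$; likewise $z_2\in\mathbf{R}\langle\langle A\rangle\rangle_{\ge m+1}$.

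Next I would write the product explicitly. Writing $z_1=\sum_{u\in A^*}a_u u$ and $z_2=\sum_{v\in A^*}b_v v$ with well-ordered supports, multiplication in $\mathbf{R}\langle\langle A\rangle\rangle$ (induced by word concatenation) gives $z_1 z_2=\sum_{w\in A^*}\big(\sum_{uv=w}a_u b_v\big)w$, which again has well-ordered support. Any word $w$ occurring with nonzero coefficient must factor as $w=uv$ with $a_u\ne 0$ and $b_v\ne 0$, hence with $\deg u\ge l+1$ and $\deg v\ge m+1$; since the degree (number of letters) is additive under concatenation, $\deg w=\deg u+\deg v\ge (l+1)+(m+1)=l+m+2$.

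Therefore every monomial in the support of $z_1 z_2$ has degree at least $l+m+2>l+m+1$, so $z_1 z_2\in\mathbf{R}\langle\langle A\rangle\rangle_{>l+m+1}$ and thus $j_{\le l+m+1}(z_1 z_2)=0$, as claimed. Alternatively one may phrase the whole argument in terms of ideals: $\mathbf{R}\langle\langle A\rangle\rangle_{\ge l+1}$ and $\mathbf{R}\langle\langle A\rangle\rangle_{\ge m+1}$ are two-sided ideals (Section 2), and their product lies in $\mathbf{R}\langle\langle A\rangle\rangle_{\ge l+m+2}$ by the same additivity of degree, which immediately gives the conclusion. There is no real obstacle beyond this bookkeeping; the only points that genuinely use the setup of Section 2 are that concatenation adds degrees and that the support of a product of elements with well-ordered support is again well-ordered, both of which are built into the definition of $\mathbf{R}\langle\langle A\rangle\rangle$.
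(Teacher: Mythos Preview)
Your argument is correct and is essentially the same as the paper's own proof: both translate the hypotheses into lower bounds $l+1$ and $m+1$ on the degrees of monomials appearing in $z_1$ and $z_2$, use additivity of degree under concatenation to conclude that every monomial in $z_1z_2$ has degree at least $l+m+2$, and deduce $j_{\le l+m+1}(z_1z_2)=0$. Your write-up is simply more detailed (and adds the equivalent ideal-theoretic phrasing), but the underlying idea is identical.
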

\begin{proof}
By the assumption,  monomials with the lowest degree contained in $ z_1 $ and $ z_2 $ are of the degree $ l+1 $ and $ m+1 $. Then, monomial with the lowest degree contained in $ z_1z_2 $ has the degree $ l+m+2 $. Hence $ j_{\le l+m+1}(z_1z_2) =0 $.
\end{proof}

\begin{cor}\label{cor:substitute}
For $z_1,z_2,z_3 \in \mathbf{R}\langle \langle A \rangle \rangle $, if $ j_{\le l}(z_1) = j_{\le l}(z_2) $ and $ j_{\le m}(z_3)=0 $, then $ j_{\le l+m+1}(z_1z_3) = j_{\le l+m+1}(z_2z_3)$.
\end{cor}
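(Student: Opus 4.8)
The plan is to reduce Corollary~\ref{cor:substitute} directly to the preceding Lemma by exploiting the $\mathbf{R}$-linearity of the truncation maps $j_{\le k}$, so no new computation is really needed.

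First I would rewrite the hypothesis $j_{\le l}(z_1) = j_{\le l}(z_2)$ in the equivalent form $j_{\le l}(z_1 - z_2) = 0$; this uses only that $j_{\le l}\colon \mathbf{R}\langle\langle A\rangle\rangle \to \mathbf{R}\langle\langle A\rangle\rangle_{\le l}$ is linear, which is immediate from its definition as the natural projection onto the low-degree part. Concretely this says $z_1 - z_2 \in \mathbf{R}\langle\langle A\rangle\rangle_{>l}$, i.e. every monomial occurring in $z_1 - z_2$ has degree at least $l+1$. Next I would apply the preceding Lemma to the pair $(z_1 - z_2, z_3)$: since $j_{\le l}(z_1 - z_2) = 0$ and, by assumption, $j_{\le m}(z_3) = 0$, the Lemma yields
\[
j_{\le l+m+1}\bigl((z_1 - z_2)\,z_3\bigr) = 0.
\]
Expanding the left-hand product by distributivity in the algebra $\mathbf{R}\langle\langle A\rangle\rangle$ gives $(z_1 - z_2)z_3 = z_1 z_3 - z_2 z_3$, and applying linearity of $j_{\le l+m+1}$ once more turns the displayed identity into $j_{\le l+m+1}(z_1 z_3) = j_{\le l+m+1}(z_2 z_3)$, which is exactly the claim.

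I do not expect any genuine obstacle here; the statement is a one-line consequence of the Lemma together with linearity. The only points that deserve a sentence of care are that $j_{\le l}$ and $j_{\le l+m+1}$ are $\mathbf{R}$-linear (true by construction of the graded pieces in Section~2) and that the products $z_1 z_3$ and $z_2 z_3$ are well-defined elements of $\mathbf{R}\langle\langle A\rangle\rangle$, which holds because every series in $\mathbf{R}\langle\langle A\rangle\rangle$ has well-ordered support, so the Cauchy product makes sense. Both facts are already built into the setup, so the proof will be essentially the three steps above.
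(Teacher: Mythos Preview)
Your proof is correct and is exactly the intended argument: the paper states this corollary without proof precisely because it is the immediate linear consequence of the preceding Lemma that you spell out. There is nothing to add.
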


\begin{cor}\label{cor:multiple}
For $ z \in \mathbf{R}\langle \langle A \rangle \rangle $, if $ j_{\le l}(z) =0 $, then $ j_{\le ml+m-1}(z^m)=0 $.
\end{cor}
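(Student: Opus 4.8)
The plan is to argue by induction on $m$, with the preceding Lemma on products as the only engine: recall it says that if $j_{\le l}(z_1)=0$ and $j_{\le m}(z_2)=0$, then $j_{\le l+m+1}(z_1z_2)=0$. Conceptually, $j_{\le l}(z)=0$ means the lowest-degree monomial occurring in $z$ has degree at least $l+1$, so multiplying $m$ such series together can only produce monomials of degree at least $m(l+1)=ml+m$; hence nothing of degree $\le ml+m-1$ survives. The proof is just the clean index bookkeeping that implements this observation.

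For the base case $m=1$, the assertion reads $j_{\le l}(z)=0$, which is exactly the hypothesis. For the inductive step, assume the statement holds for $m-1$, i.e.
$$
j_{\le (m-1)l+(m-1)-1}(z^{m-1})=0, \qquad \text{equivalently} \qquad j_{\le (m-1)l+m-2}(z^{m-1})=0.
$$
Write $z^m=z^{m-1}\cdot z$ and apply the product Lemma with $z_1=z^{m-1}$ (vanishing up to degree $(m-1)l+m-2$) and $z_2=z$ (vanishing up to degree $l$). This yields
$$
j_{\le \big((m-1)l+m-2\big)+l+1}(z^m)=0,
$$
and since $(m-1)l+m-2+l+1=ml+m-1$, we conclude $j_{\le ml+m-1}(z^m)=0$, completing the induction.

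I do not expect a genuine obstacle here; the only point requiring attention is the arithmetic of the $+1$ in the product Lemma, which reflects that the minimal degree of $z_1z_2$ is the sum of the two minimal degrees, i.e. $(l+1)+(m+1)=(l+m+1)+1$. All the substantive content has already been isolated in that Lemma, so the corollary is a purely mechanical consequence.
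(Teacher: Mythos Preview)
Your proof is correct and matches the paper's intent: the corollary is stated there without proof as an immediate consequence of the preceding Lemma, and your induction on $m$ using that Lemma is precisely the expected argument, with the arithmetic $(m-1)l+m-2+l+1=ml+m-1$ carried out correctly.
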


\begin{theo}\label{theo:critical}
If a series,
\begin{equation}
q_n:=\sum_{i=1}^mf_{\theta_i}(q^{[\theta_i]})^n,
\end{equation}
is a $2m$-th order generalized Fujiwara scheme, then for all $ l \in \{ 2,\ldots,m-1 \} $, 
\begin{equation}
j_{\le 2m+l-1}(\sum_{i=1}^mf_{\theta_i}(q^{[\theta_i]} - p)^l) = 0
\end{equation}
holds true.
\end{theo}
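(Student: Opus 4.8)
The plan is to reduce the statement to the uniform-in-$\theta$ expansion of $q^{[\theta]}$ around $p$ provided by Proposition \ref{prop:fujiwara}, and then to feed in the linear relations among the coefficients $f_{\theta_i}$ of a generalized Fujiwara scheme recorded in Corollary \ref{cor:key}. Everything will be graded by word-degree, so the whole argument can be carried out in the finite-dimensional quotient obtained by applying $j_{\le 2m+l-1}$.

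Fix $l\in\{2,\dots,m-1\}$. By Proposition \ref{prop:fujiwara} one may write, for every $j\in\{1,\dots,m\}$, $q^{[\theta_j]}-p=\sum_{i\ge 1}c_i/\theta_j^{2i}$ with the \emph{same} coefficients $c_i\in\mathbf{R}\langle\langle A\rangle\rangle_{\ge 2i+1}$; this uniformity in $\theta$ is the feature we exploit. Since $c_i$ contains no monomials of degree $\le 2i$, only finitely many $c_i$ contribute in any fixed degree, so the $l$-th power may be expanded termwise and, after collecting terms according to the total index $s:=i_1+\dots+i_l$, one gets
$$\sum_{j=1}^m f_{\theta_j}\,(q^{[\theta_j]}-p)^l=\sum_{s\ge l}\Bigl(\sum_{i_1+\dots+i_l=s}c_{i_1}c_{i_2}\cdots c_{i_l}\Bigr)\Bigl(\sum_{j=1}^m\frac{f_{\theta_j}}{\theta_j^{2s}}\Bigr).$$
The reordering is legitimate because each word of a fixed degree receives only finitely many contributions; the cleanest way to make this rigorous is to apply $j_{\le 2m+l-1}$ at the outset and work in the finite-dimensional space $\mathbf{R}\langle A\rangle_{\le 2m+l-1}$, where all sums involved are genuinely finite.

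Two observations then finish the proof. First, since each $i_k\ge 1$ we have $s\ge l$, and iterating the degree-additivity lemma that precedes Corollary \ref{cor:substitute} (applied $l-1$ times) shows that the block $\sum_{i_1+\dots+i_l=s}c_{i_1}\cdots c_{i_l}$ lies in $\mathbf{R}\langle\langle A\rangle\rangle_{\ge 2s+l}$; hence whenever $s\ge m$ this block is annihilated by $j_{\le 2m+l-1}$, because $2s+l\ge 2m+l>2m+l-1$. Second, for $l\le s\le m-1$ the scalar factor $\sum_{j=1}^m f_{\theta_j}/\theta_j^{2s}$ vanishes: this is precisely Corollary \ref{cor:key} combined with the defining identity $f=A^{-1}[1,0,\dots,0]^T$ of a generalized Fujiwara scheme. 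Thus every surviving summand is killed either by the degree estimate or by the scalar relation, and $j_{\le 2m+l-1}\bigl(\sum_{j}f_{\theta_j}(q^{[\theta_j]}-p)^l\bigr)=0$, as claimed.

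The only delicate point — more bookkeeping than genuine obstacle — is justifying the termwise expansion of the $l$-th power and the interchange of the two summations inside $\mathbf{R}\langle\langle A\rangle\rangle$; as indicated, this is dissolved by the grading, i.e. by truncating with $j_{\le 2m+l-1}$ before computing. (One may note in passing that the same computation with $l=1$ recovers Corollary \ref{cor:order}, so the restriction $l\ge 2$ is only a matter of which cases are needed downstream.)
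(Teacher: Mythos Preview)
Your argument is correct and is essentially the same as the paper's: expand $(q^{[\theta_j]}-p)^l$ via Proposition~\ref{prop:fujiwara}, group by the total index $s=i_1+\cdots+i_l$, use the degree bound $c_{i_1}\cdots c_{i_l}\in\mathbf{R}\langle\langle A\rangle\rangle_{\ge 2s+l}$ to discard $s\ge m$, and invoke Corollary~\ref{cor:key} for the remaining $l\le s\le m-1$. The paper proceeds in precisely this order, so there is no substantive difference to report.
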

\begin{proof}
Fix $ l \in \{2,\ldots,m-1\} $. By Proposition \ref{prop:fujiwara}, 
\begin{equation}
	\sum_{i=1}^m f_{\theta_i}(q^{[\theta_i]}-p)^l=\sum_{i=1}^mf_{\theta_i}\sum_{i_1,\ldots,i_l=1}^\infty\frac{c_{i_1}\cdots c_{i_l}}{\theta_i^{2(i_1 + \cdots i_l)}}
\end{equation}
holds. It is easy to see that 
\[c_{i_1} \cdots c_{i_l} \in \mathbf{R}\langle \langle A \rangle \rangle_{\ge 2(i_1+ \cdots i_l)+l}.\]
Hence, we have
\begin{align*}
&j_{\le 2m+l-1}\left(\sum_{i_1,\ldots,i_l=1}^\infty \frac{c_{i_1}\cdots c_{i_l}}{\theta_i^{2(i_1+\cdots + i_l)}} \right) \\
=& j_{\le 2m+l-1}\left(\sum_{\substack{i_1, \ldots, i_l \ge 1 \\ i_1+\cdots + i_l \le m-1}}\frac{c_{i_1}\cdots c_{i_l}}{\theta^{2(i_1+\cdots i_l)}}\right) \\
=& j_{\le 2m+l-1}\left(\sum_{k=l}^{m-1}\sum_{\substack{i_1, \ldots, i_l \ge 1 \\ i_1+\cdots + i_l = k}}\frac{c_{i_1}\cdots c_{i_l}}{\theta^{2k}}\right).
\end{align*}
Thus, we have 
\begin{align*}
&j_{\le 2m+l-1}(\sum_{i=1}^mf_{\theta_i}(q^{[\theta_i]}-p)^l) \\
=&j_{\le 2m+l-1}(\sum_{k=l}^{m-1}\sum_{\substack{i_1, \ldots, i_l \ge 1 \\ i_1+\cdots + i_l = k}}\sum_{i=1}^mf_{\theta_i}\frac{1}{\theta_i^{2k}}c_{i_1} \cdots c_{i_l}) \\
=& j_{\le 2m+l-1}(\sum_{k=l}^{m-1}\sum_{\substack{i_1, \ldots, i_l \ge 1 \\ i_1+\cdots + i_l = k}}f^T \begin{bmatrix} \frac{1}{\theta_1^{2k}} \\ \vdots \\ \frac{1}{\theta_m^{2k}} \end{bmatrix}c_{i_1}\cdots c_{i_l}) \\
=&0
\end{align*}
by Corollary \ref{cor:key}.
\end{proof}

\begin{defi}
  A generalized power series $a\in \R\langle\langle A\rangle\rangle$ is an element of $O(s)$ if for every $n,N\in\N$, $n\leq N$ there exists a uniform bound for all the coefficients of the terms of $\frac{1}{s}a$ with degree $k$, which satisfies $n\leq k\leq N$ as $s\to 0$.
\end{defi}

\begin{theo} \label{theo:connection}
If $ \{q_n\}_{n\in\mathbf{N}} $ is an $ m $-th order generalized Fujiwara scheme, then 
$$ \Psi_{1/n} q_n - \Psi_1 p =\sum_{l=1}^m \sum_{k_l=l-1}^{n-1} \sum_{k_{l-1}=l-2}^{k_l-1}\dots \sum_{k_2=1}^{k_3-1} \sum_{k_1=0}^{k_2-1}\Psi_{1/n} a_{l,(k_l,\dots,k_1)},
$$
where $j_{\leq 2m+l-1}(a_{l,(k_l,\dots,k_1)})=0$ for $ l=1,\ldots,m$, and $ \Psi_{1/n} q_n - \Psi_1 p \in O(\frac{1}{n^{2m}})$.
\end{theo}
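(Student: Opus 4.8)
The strategy is to feed the telescoping decomposition of Proposition~\ref{prop:decomposition} through the algebra homomorphism $\Psi_{1/n}$ and then estimate each resulting block using the degree-vanishing facts already established. First I would apply Proposition~\ref{prop:decomposition} to $q_n = \sum_{i=1}^m f_{\theta_i}(q^{[\theta_i]})^n$, exploiting that $\sum_i f_{\theta_i}=1$ (this is the top row of the identity $f = A^{-1}e_1$), so that $\sum_i f_{\theta_i} p^n = p^n$ and hence $q_n - p^n = \sum_i f_{\theta_i}\big((q^{[\theta_i]})^n - p^n\big)$. Expanding each $(q^{[\theta_i]})^n - p^n$ by Proposition~\ref{prop:decomposition} and regrouping the sum over the block-size index $l\in\{1,\dots,m\}$, the coefficient accompanying a fixed pattern of nested indices $(k_l,\dots,k_1)$ is a word of the form
$$
a_{l,(k_l,\dots,k_1)} \;=\; \sum_{i=1}^m f_{\theta_i}\, (\ast)^{k_1}\,(q^{[\theta_i]}-p)\,p^{(k_2-k_1-1)}\cdots p^{(n-k_l-1)},
$$
where the leftmost factor $(\ast)^{k_1}$ is $(q^{[\theta_i]})^{k_1}$ for $l=m$ and $p^{k_1}$ otherwise; this gives exactly the asserted displayed formula after applying $\Psi_{1/n}$.

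The heart of the matter is the claim $j_{\le 2m+l-1}(a_{l,(k_l,\dots,k_1)})=0$. I would argue as follows. Each $q^{[\theta_i]}-p$ lies in $\R\langle\langle A\rangle\rangle_{\ge 3}$ by Proposition~\ref{prop:fujiwara}, and $p$ has constant term $1$, so multiplying by powers of $p$ does not lower degrees; thus the word attached to a single block of size $l$ already vanishes under $j_{\le 3l-1}$, which for $l\ge 2$ is at least as strong as $j_{\le l+2}$. For the crucial gain, one must use the cancellation across the index $i$: for $l\ge 2$ Theorem~\ref{theo:critical} tells us $j_{\le 2m+l-1}\big(\sum_i f_{\theta_i}(q^{[\theta_i]}-p)^l\big)=0$, and the same proof mechanism — expand each $q^{[\theta_i]}-p$ via Proposition~\ref{prop:fujiwara}, collect the scalar $\sum_i f_{\theta_i}\theta_i^{-2k}$, and kill it via Corollary~\ref{cor:key} for $1\le k\le m-1$ — applies verbatim when the $l$ factors $(q^{[\theta_i]}-p)$ are separated by fixed powers of $p$ and preceded by a fixed power of $p$, since $p$ carries no $\theta_i$-dependence; the only $\theta_i$-dependent scalar that survives truncation at degree $2m+l-1$ is again $\sum_i f_{\theta_i}\theta_i^{-2k}$ with $k\le m-1$, which is zero. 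For $l=1$ the claim is just Corollary~\ref{cor:order}: $j_{\le 2m}\big(\sum_i f_{\theta_i}(q^{[\theta_i]}-p)\big)=0$, and multiplying on the right by powers of $p$ preserves this, while multiplying on the left by $p^{k}$ also preserves it; note $2m+1-1=2m$, so the bound matches. The one subtlety for $l=m$ is that the leftmost factor is $(q^{[\theta_i]})^{k_1}$ rather than $p^{k_1}$; here I replace $q^{[\theta_i]}$ by $p$ using Proposition~\ref{prop:fujiwara} (they agree modulo $\R\langle\langle A\rangle\rangle_{\ge 3}$) — by Corollary~\ref{cor:substitute} this substitution is harmless at truncation level $2m+m-1$ provided the remaining factors already vanish under a sufficiently low $j$, which they do since they contain $m$ factors from $\R\langle\langle A\rangle\rangle_{\ge 3}$, hence vanish under $j_{\le 3m-1}$, and $3m-1 \ge 2m+m-1$.

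Finally, the $O(1/n^{2m})$ bound. After applying $\Psi_{1/n}$, each word $\Psi_{1/n}a_{l,(k_l,\dots,k_1)}$ is a generalized power series whose lowest-degree term has degree $\ge 2m+l$; moreover $\Psi_{1/n}$ turns each letter $a_j$ into $\tfrac{1}{n}v_0$ or $\tfrac{1}{n}v_j^2/2$, so a term of degree $k$ acquires a factor $n^{-k} \le n^{-(2m+l)}$ — this is precisely the definition of being in $O(1/n^{2m+l})$ after dividing by $1/n$, i.e.\ $\Psi_{1/n}a_{l,(k_l,\dots,k_1)}\in O(n^{-(2m+l)})$ once one tracks that multiplication by bounded powers of $\Psi_{1/n}p$ keeps coefficients uniformly bounded on each degree band. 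The $l$-th block carries $\binom{n}{l} = O(n^l)$ nested summands, so summing gives $O(n^l)\cdot O(n^{-(2m+l)}) = O(n^{-2m})$ for each $l\in\{1,\dots,m\}$, and adding the $m$ blocks preserves the order. The main obstacle I anticipate is bookkeeping: making the degree-counting in the $O(s)$ definition interact cleanly with the telescoping combinatorics — in particular verifying uniformity of the coefficient bounds in the presence of the factors $p^{k_j - k_{j-1}-1}$ whose exponents grow with $n$ — and confirming that the Theorem~\ref{theo:critical}-style cancellation genuinely survives the insertion of these intervening $p$-powers. Once those are pinned down, the rest is the routine counting just sketched.
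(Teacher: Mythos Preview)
Your proposal is correct and follows essentially the same route as the paper: the telescoping decomposition from Proposition~\ref{prop:decomposition}, the same $j_{\le 2m+l-1}$-vanishing arguments (Corollary~\ref{cor:order} for $l=1$, the Theorem~\ref{theo:critical} mechanism with inserted $p$-powers for $1<l<m$, pure degree-counting giving $3m-1=2m+m-1$ for $l=m$), and the same $O(n^{-2m})$ counting via $O(n^l)$ summands each of size $O(n^{-(2m+l)})$. The two bookkeeping issues you flag are precisely what the paper spells out: the growth of the coefficients of $p^{k_j}$ is handled by tracking that the degree-$(2m{+}l{+}M)$ coefficient of $a_{l,(k_l,\dots,k_1)}$ is a polynomial of degree at most $M$ in $k_1,\dots,n{-}k_l{-}1$ (so the $n^{-(2m+l+M)}$ from $\Psi_{1/n}$ absorbs it), and the $(q^{[\theta_i]})^{k_1}$ factor in the $l=m$ block is controlled by a positivity/abelianization trick---since all coefficients of $q^{[\theta]}$ are nonnegative and its abelianization equals that of $p$, each coefficient of $(q^{[\theta]})^{k_1}$ is bounded by the corresponding commutative coefficient of $p^{k_1}$.
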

\begin{proof}
Let $ m \ge 2 $. The case $ m=1 $ is trivial.
Let $ \{q_n:=\sum_{i=1}^mf_{\theta_i}(q^{[\theta_i]})^n \}_{n\in\mathbf{N}}$ be an $ m $-th order generalized Fujiwara scheme. Note that $\Psi_1 p=(\Psi_{1/n} p)^n$. Then by Proposition \ref{prop:decomposition}, we have, 
\begin{align*}
&\Psi_{1/n} q_n - \Psi_1 p  \\
&= \sum_{i=1}^mf_{\theta_i}\sum_{k=0}^{n-1}(\Psi_{1/n} p)^k(\Psi_{1/n} q^{[\theta_i]}-\Psi_{1/n} p)(\Psi_{1/n} p)^{n-k-1} \\
&+ \sum_{i=1}^mf_{\theta_i}\sum_{l=2}^{m-1}\sum_{k_l=l-1}^{n-1}\sum_{k_{l-1}=l-2}^{k_l-1}\cdots \sum_{k_2=1}^{k_3-1}\sum_{k_{1}=0}^{k_2-1} {(\Psi_{1/n} p)}^{k_1}(\Psi_{1/n} q^{[\theta_i]}-\Psi_{1/n} p){(\Psi_{1/n} p)}^{k_2-k_1-1} \times \cdots \\ & \times (\Psi_{1/n} q^{[\theta_i]}-\Psi_{1/n} p) 
{(\Psi_{1/n} p)}^{k_l-k_{l-1}-1} (\Psi_{1/n} q^{[\theta_i]}-\Psi_{1/n} p){(\Psi_{1/n} p)}^{n-k_l-1} \\
&+ \sum_{i=1}^mf_{\theta_i}\sum_{k_m=m-1}^{n-1}\sum_{k_{m-1}=m-2}^{k_m-1}\cdots \sum_{k_2=1}^{k_3-1}\sum_{k_{1}=0}^{k_2-1} {(\Psi_{1/n} q^{[\theta_i]})}^{k_1}(\Psi_{1/n} q^{[\theta_i]}-\Psi_{1/n} p){(\Psi_{1/n} p)}^{k_2-k_1-1} \times \cdots \\ 
& \times (\Psi_{1/n} q^{[\theta_i]}-\Psi_{1/n} p) {(\Psi_{1/n} p)}^{k_m-k_{m-1}-1} (\Psi_{1/n} q^{[\theta_i]}-\Psi_{1/n} p){(\Psi_{1/n} p)}^{n-k_m-1}.
\end{align*}
Set 
\[
  a_{1,(k_1)}=\sum_{i=1}^m f_{\theta_i} p^{k_1}(q^{[\theta_i]}-p) p^{n-k_1-1}.
\]
For $l\in \{2,\dots,m-1\}$ set
\[
  a_{l,(k_l,\dots,k_1)}=\sum_{i=1}^mf_{\theta_i} p^{k_1}(q^{[\theta_i]}-p) p^{k_2-k_1-1} (q^{[\theta_i]}- p) \cdots p^{k_l-k_{l-1}-1} (q^{[\theta_i]}-p) p^{n-k_l-1} 
\]
and for $l=m$ define
\[
  a_{m,(k_m,\dots,k_1)}=\sum_{i=1}^mf_{\theta_i} (q^{[\theta_i]})^{k_1}(q^{[\theta_i]}-p) p^{k_2-k_1-1} (q^{[\theta_i]}-p) \cdots p^{k_m-k_{m-1}-1} (q^{[\theta_i]}-p) p^{n-k_m-1}.
\]
In particular the summand $a_{1,(k_1)}$ can be written as
\begin{equation}
a_{1,(k_1)} =p^{k_1} \sum_{i=1}^mf_{\theta_i}(q^{[\theta_i]}-p)p^{n-k_1-1}.
\end{equation}
Let $c_i\in \mathbf{R}\langle\langle A\rangle\rangle_{\geq 2i+1}$, $i\in \mathbf{N}$ be as in Proposition \ref{prop:fujiwara}. By Theorem \ref{theo:critical}, $$ j_{\le 2m}(\sum_{i=1}^mf_{\theta_i}(q^{[\theta_i]}-p)) =0, $$ thus $j_{\le 2m}(a_{1,(k_1)})=0$. Also it holds that $$j_{2m+1}(\sum_{i=1}^mf_{\theta_i}(q^{[\theta_i]}-p))=j_{2m+1}(c_m) [1/\theta_1^{2m},\dots,1/\theta_m^{2m}] f. $$ 

By Corollary \ref{cor:order}, for all $ \theta \in \mathbf{N} $, $ j_{\le 2}(q^{[\theta]} - p )=0 $ holds. Thus, by Corollary \ref{cor:multiple}, Corollary \ref{cor:substitute} and Proposition \ref{prop:fujiwara}
\begin{equation}
j_{\le 3m-1}((q^{[\theta_i]})^{k_1}(q^{[\theta_i]}-p)p^{k_2-k_1-1}(q^{[\theta_i]}-p) \cdots p^{k_m-k_{m-1}-1}(q^{[\theta_i]}-p)p^{n-k_m-1}) =0
\end{equation} 
holds, hence $j_{\le 3m-1}(a_{m,(k_m,\dots,k_1)})=0$. Moreover, 
\[
 j_{3m}((q^{[\theta_i]})^{k_1}(q^{[\theta_i]}-p)p^{k_2-k_1-1}(q^{[\theta_i]}-p) \cdots p^{k_m-k_{m-1}-1}(q^{[\theta_i]}-p)p^{n-k_m-1})=(j_{3}(c_1))^m \frac{1}{\theta_i^{2m}}.
\]

Let $p_1,\dots,p_{l+1}\in \mathbf{R}\langle\langle A\rangle\rangle$ with property $j_0(p_i)=1$ for $i\in\{1,\dots,l+1\}$. By using similar arguments as in the proof of Theorem \ref{theo:critical}, we get
\begin{align*}
&j_{\le 2m+l-1}\Big(\sum_{i=1}^m f_{\theta_i}p_{1}(q^{[\theta_i ] }-p)p_{2}(q^{[\theta_i ]}-p) 
\cdots p_{l}(q^{[\theta_i ]}-p)p_{l+1}\Big)\\
&= j_{\le 2m+l-1}\Big(\sum_{k=l}^{m-1}\sum_{\substack{i_1, \ldots, i_l \ge 1 \\ i_1+\cdots + i_l = k}}f^T \begin{bmatrix} \frac{1}{\theta_1^{2k}} \\ \vdots \\ \frac{1}{\theta_m^{2k}} \end{bmatrix}p_1c_{i_1}p_2\cdots p_lc_{i_l}p_{l+1}\Big) \\
&=0,
\end{align*}
and
\begin{align*}
 	j_{2m+l}&\Big(\sum_{i=1}^m f_{\theta_i}p_{1}(q^{[\theta_i]}-p)p_{2}(q^{[\theta_i]}-p) 
\cdots p_{l}(q^{[\theta_i]}-p)p_{(l+1)}\Big)\\
=& \sum_{\substack{i_1, \ldots, i_l \ge 1 \\ i_1+\cdots + i_l = m}} f^T \begin{bmatrix}
                                                                         \frac{1}{\theta_1^{2m}}\\
                                                                         \vdots\\
                                                                         \frac{1}{\theta_m^{2m}}
                                                                        \end{bmatrix}
                                                                        j_{2m+l}(c_{i_1}\cdots c_{i_l})
\end{align*}
for all $ l \in \{2,\ldots , m-1\} $. We conclude, that $j_{\le 2m+l-1}(a_{l,(k_l,\dots,k_1)})=0$.

It remains to prove that $ \Psi_{1/n} q_n - \Psi_1 p \in O(\frac{1}{n^{2m+1}})$. 

First, let us observe $a_{l,(k_l,\dots,k_1)}$ for $l\in\{1,\dots,m-1\}$. Choose $M\in\N\cup\{0\}$. As above we can write
\begin{align*}
 	j_{2m+l+M}&(a_{l,(k_l,\dots,k_1)})\\
 	=&j_{2m+l+M}\Big(\sum_{i=1}^mf_{\theta_i} p^{k_1}(q^{[\theta_i]}-p) p^{k_2-k_1-1} (q^{[\theta_i]}- p) \cdots p^{k_l-k_{l-1}-1} (q^{[\theta_i]}-p) p^{n-k_l-1}\Big)\\
=& \sum_{k=m}^{m+[M/2]}\sum_{\substack{i_1, \ldots, i_l \ge 1 \\ i_1+\cdots + i_l = k}} f^T \begin{bmatrix}
                                                                         \frac{1}{\theta_1^{2k}}\\
                                                                         \vdots\\
                                                                         \frac{1}{\theta_m^{2k}}
                                                                        \end{bmatrix}
                                                                        j_{2m+l+M}(p^{k_1}c_{i_1}p^{k_2-k_1-1}\cdots p^{k_l-k_{l-1}-1}c_{i_l}p^{n-k_l-1})
\end{align*}
The coefficient of the power $p^k$ of the term of degree $l$ is of the form $k^l c$, where $c$ is the coefficient of $p$ of the same degree.  Hence, the coefficient of the term of degree ${2m+l+M}$ of $$ p^{k_1}c_{i_1}p^{k_2-k_1-1}\cdots p^{k_l-k_{l-1}-1}c_{i_l}p^{n-k_l-1}$$ is a finite sum, namely $$\sum_{i\in I} k_1^{n_{1,i}} (k_2-k_1-1)^{n_{2,i}}\dots (n-k_l-1)^{n_{l+1,i}} b_i, $$ where $ b_i$ and the number of summands do not depend on $n$, and $n_{j,i}\in\N$, $\sum_{j=1}^{l+1} n_{j,i}\leq 2m+M - 2\sum_{k=1}^l i_k$. Let us denote $b'_{i,k}=[\frac{1}{\theta_1^{2k}}\dots \frac{1}{\theta_m^{2k}}]\, f\, b_i$. Thus, the coefficient of a term of degree $ 2m+l+M $ of
\[
\Psi_{1/n} \sum_{k_l=l-1}^{n-1} \sum_{k_{l-1}=l-2}^{k_l-1}\dots \sum_{k_2=1}^{k_3-1} \sum_{k_1=0}^{k_2-1} a_{l,(k_l,\dots,k_1)}
\]
has the following upper bound

\begin{align*}
  \frac{1}{n^{2m+M+l}}&\Big\lvert \sum_{k_l=l-1}^{n-1} \sum_{k_{l-1}=l-2}^{k_l-1}\dots \sum_{k_2=1}^{k_3-1} \sum_{k_1=0}^{k_2-1}\\
  &\sum_{k=m}^{m+[M/2]}\sum_{\substack{i_1, \ldots, i_l \ge 1 \\ i_1+\cdots + i_l = k}} \sum_{i\in I} k_1^{n_{1,i}} (k_2-k_1-1)^{n_{2,i}}\dots (n-k_l-1)^{n_{l+1,i}} b'_{i,k}\Big\rvert\\
  & \leq \frac{1}{n^{2m+M+l}} \sum_{k_l=l-1}^{n-1} \sum_{k_{l-1}=l-2}^{k_l-1}\dots \sum_{k_2=1}^{k_3-1} \sum_{k_1=0}^{k_2-1}\\
  &\sum_{k=m}^{m+[M/2]}\sum_{\substack{i_1, \ldots, i_l \ge 1 \\ i_1+\cdots + i_l = k}} \sum_{i\in I} \lvert k_1^{n_{1,i}} (k_2-k_1-1)^{n_{2,i}}\dots (n-k_l-1)^{n_{l+1,i}} b'_{i,k} \rvert\\
  & \leq \sum_{k_l=l-1}^{n-1} \sum_{k_{l-1}=l-2}^{k_l-1}\dots \sum_{k_2=1}^{k_3-1} \sum_{k_1=0}^{k_2-1} \sum_{k=m}^{m+[M/2]}\sum_{\substack{i_1, \ldots, i_l \ge 1 \\ i_1+\cdots + i_l = k}}\sum_{i\in I} \frac{1}{n^{l+2\sum_{k=1}^l i_k}} \lvert b'_{i,k}\rvert.
\end{align*}
Since the number of terms of 
\[
  j_{2m+l+M}(\Psi_{1/n} \sum_{k_l=l-1}^{n-1} \sum_{k_{l-1}=l-2}^{k_l-1}\dots \sum_{k_2=1}^{k_3-1} \sum_{k_1=0}^{k_2-1} a_{l,(k_l,\dots,k_1)})
\]
is finite and their number does not depend on $n$, there exists a uniform bound for all of the coefficients, which proves our assertion for $l<m$.

Let us observe the $a_{m,(k_m,\dots,k_1)}$. As above
\begin{align*}
 	j_{2m+l+M}&(a_{m,(k_m,\dots,k_1)})\\
 	=&j_{2m+l+M}\Big(\sum_{i=1}^mf_{\theta_i} (q^{[\theta_i]})^{k_1}(q^{[\theta_i]}-p) p^{k_2-k_1-1} (q^{[\theta_i]}- p) \cdots p^{k_m-k_{m-1}-1} (q^{[\theta_i]}-p) p^{n-k_m-1}\Big)\\
=& \sum_{k=m}^{m+[M/2]}\sum_{\substack{i_1, \ldots, i_m \ge 1 \\ i_1+\cdots + i_m = k}} \sum_{i=1}^m f_{\theta_i} \frac{1}{\theta_i^{2k}} j_{3m+M}((q^{[\theta_i]})^{k_1}c_{i_1}p^{k_2-k_1-1}\cdots p^{k_m-k_{m-1}-1}c_{i_m}p^{n-k_m-1})
\end{align*}
Since $q^{[\theta]}$ is a convex combination of products of $\exp (\frac{1}{\theta} a_i)$ all its coefficients are positive and the sum of all coefficients at the terms which are derived from $a_0^{s_0}\cdots a_d^{s_d}$ by permutation of $a_i$'s is exactly the coefficient at the term $a_0^{s_0}\cdots a_d^{s_d}$ of the $\exp (\sum_{i=0}^d a_i)$ in the (commutative) power series algebra, generated by $a_0,\dots,a_d$, with coefficients in $\R$ and the same goes for the coefficients of the power $(q^{[\theta]})^k$ and the coefficients of the commutative series $(\exp (\sum_{i=0}^d a_i))^k$. The rest of the argument goes as in the case of $l<m$, which gives us the upper bound of the coefficients at the terms of
\[
  j_{2m+l+M}(\Psi_{1/n} \sum_{k_m=m-1}^{n-1} \sum_{k_{m-1}=m-2}^{k_m-1}\dots \sum_{k_2=1}^{k_3-1} \sum_{k_1=0}^{k_2-1} a_{m,(k_m,\dots,k_1)}).
\]
\end{proof}
Now we are able -- by means of our homomorphisms $ \Psi $ and $ \Phi $ to transfer the algebraic results into the realm of weak approximation schemes.
\begin{defi}\label{k_norms}
  Let $k\in\N$ and let $g\in C_b^\infty(\R^d)$. Define
  \[
    \| g\|_k:= \sup_{i\leq k} \| \bigtriangledown^i g\|_\infty.
  \]
\end{defi}

\begin{rem}
The function $\| \cdot\|_k$ is a norm on $C_b^\infty(\R^d)$.
\end{rem}

\begin{defi}
Let $\mathcal{B}_k$ denote the space of bounded linear operators on $(C_b^\infty(\R^d),\|\cdot\|_k )$. We can regard $\mathcal{B}_k$ as a normed space with the operator norm. 
\end{defi}

\begin{prop}[\cite{IW}]
Fix a $k\in\N$. The following assertions hold:
\begin{enumerate}
  \item The family $(P_t)_{t\geq 0}$ is a uniformly bounded subset of $\mathcal{B}_k$.
  \item Let $\mathcal{A}$ be the geneartor of the diffusion process \eqref{sde} and let $N\in \N$. Then, for $g\in C_b^\infty(\R^d)$ we have
  \[
    (P_t g)(x)= \sum_{k=0}^N \frac{t^k}{k!}(\mathcal{A}^k g) (x) +\frac{1}{N!}\int_0^t (t-s)^N (P_s \mathcal{A}^{N+1}g)(x)q, ds.
  \]
\end{enumerate}
\end{prop}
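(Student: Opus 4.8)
The plan is to treat both items as classical facts about the flow of \eqref{sde} with $C^\infty_b$ coefficients; the route I would take is as follows.

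For item (1), I would start from the flow representation $P_t g(x) = E[g(X(t,x))]$ and use that, since $V_0,\dots,V_d\in C^\infty_b(\mathbf R^N;\mathbf R^N)$, the map $x\mapsto X(t,x)$ is almost surely $C^\infty$ and its derivative processes $X^{(\alpha)}(t,x):=\partial_x^\alpha X(t,x)$ solve the SDEs obtained by formally differentiating \eqref{sde}: for $|\alpha|=1$ this is a linear Stratonovich equation with coefficients $DV_i(X(t,x))$, and for $|\alpha|\ge 2$ it is affine in $X^{(\alpha)}$ with a forcing term polynomial in the lower-order derivative processes with coefficients of the form $(\partial^\beta V_i)(X(t,x))$. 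An induction on $|\alpha|$, passing to It\^o form, applying the Burkholder--Davis--Gundy inequality and then Gronwall's lemma, gives for every $p\ge 1$ and every $T>0$
\[
\sup_{0\le t\le T}\ \sup_{x\in\mathbf R^N} E\big[\,|X^{(\alpha)}(t,x)|^p\,\big]\le C(p,\alpha,T),
\]
the point being that all coefficients appearing there are bounded uniformly in $x$. These bounds justify differentiating under the expectation, and the multivariate chain rule yields, for a multi-index $\beta$, a finite expansion
\[
\nabla^\beta(P_t g)(x)=E\Big[\ \sum_{\gamma,\,(\alpha_j)} c_{\beta,\gamma,(\alpha_j)}\,(\nabla^\gamma g)(X(t,x))\prod_j X^{(\alpha_j)}(t,x)\ \Big]
\]
with $|\gamma|\le|\beta|$; taking suprema and using H\"older's inequality for the product of derivative processes gives $\|\nabla^\beta(P_t g)\|_\infty\le C(\beta,T)\,\|g\|_{|\beta|}$ for $0\le t\le T$, hence $\|P_t g\|_k\le C(k,T)\,\|g\|_k$. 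This is the asserted uniform boundedness of $(P_t)$ in $\mathcal B_k$, uniformly on each finite time interval --- which is the form actually needed later.

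For item (2), once (1) is in hand the identity is just Taylor's theorem in time. The same moment bounds allow one to differentiate $t\mapsto P_t g(x)$ under the expectation and, combined with It\^o's formula for $g(X(t,x))$, to obtain $\frac{d}{dt}P_t g=\mathcal A P_t g=P_t\mathcal A g$ for every $g\in C^\infty_b(\mathbf R^N)$; since $\mathcal A$ maps $C^\infty_b$ into itself, iteration shows $u(t):=P_t g(x)$ is $C^\infty$ in $t$ with $u^{(k)}(t)=P_t\mathcal A^k g(x)$, so that $u^{(k)}(0)=\mathcal A^k g(x)$. The one-dimensional Taylor formula with integral remainder applied to $u$ on $[0,t]$,
\[
u(t)=\sum_{k=0}^N\frac{t^k}{k!}u^{(k)}(0)+\frac{1}{N!}\int_0^t(t-s)^N u^{(N+1)}(s)\,ds,
\]
together with these substitutions gives the claim, the remainder integral being well defined because $s\mapsto P_s\mathcal A^{N+1}g(x)$ is continuous by (1).

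The main obstacle is entirely in item (1): executing the induction that bounds the high-order derivative processes $X^{(\alpha)}$ uniformly in the starting point and locally uniformly in time. This is routine in principle --- BDG plus Gronwall at each level of differentiation --- but the bookkeeping is the delicate part: one must track precisely which multi-indices $\gamma$ and which tuples $(\alpha_j)$ occur in the chain-rule expansion and, at each stage $|\alpha|$, which derivative-process factors and of what orders must already be under control. Item (2) is then essentially immediate. For the full argument we refer to \cite{IW}.
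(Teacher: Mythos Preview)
Your sketch is correct and follows the standard route one finds in Ikeda--Watanabe. Note, however, that the paper does not supply its own proof of this proposition at all: it is stated with the attribution \cite{IW} and used as a black box, so there is nothing in the paper to compare your argument against beyond the reference itself. Your outline --- moment bounds for the derivative processes $\partial_x^\alpha X(t,x)$ via BDG and Gronwall for item~(1), then the forward/backward Kolmogorov identity $\frac{d}{dt}P_t g = P_t\mathcal A g$ combined with the one-variable Taylor formula with integral remainder for item~(2) --- is exactly the approach behind the cited result.
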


We are now able to formulate weak approximation schemes of order $ 2m $ by means of our algebraic preparations. Recall therefore the definitions
\begin{align*}
\overrightarrow{Q_t^{[\theta]}} :&= \left( P^{(0)}_{t/\theta} \circ \cdots \circ P^{(d)}_{t/\theta}\right)^\theta, \\ 
\overleftarrow{Q_t^{[\theta]}} :&= \left( P^{(d)}_{t/\theta} \circ \cdots \circ P^{(0)}_{t/\theta}\right)^\theta, \\
Q_t^{[\theta]} :&= \frac{1}{2}(\overrightarrow{Q_t^{[\theta]}} + \overleftarrow{Q_t^{[\theta]}}). 
\end{align*}
of the building blocks of Ninomiya-Victoir schemes.
\begin{theo} \label{theo:final_result}
Let $ \{q_n\}_{n\in\mathbf{N}} $ be a generalized Fujiwara scheme of order $2m$, then 
$$ Q_{T,n} := \sum_{i=1}^mf_{\theta_i}(Q_{\frac{T}{n}}^{[\theta_i]})^n $$
for $ n \geq 0 $ is a scheme of weak approximation of order $ 2m $, where a choice of $ k $ is given by
$$
k = 2(2m + 1)(d+1) \sum_{i=1}^m \theta_i,
$$
that means
$$
| P_T \, g(x) - Q_{T,n} \, g(x) | \leq \frac{C}{n^{2m}} \| g \|_{k}
$$
for test functions $ g \in C^\infty_b (\mathbb{R}^N) $.
\end{theo}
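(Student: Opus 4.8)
The plan is to transfer the algebraic estimate of Theorem \ref{theo:connection} through the homomorphisms $\Psi_{1/n}$ and $\Phi$ into an operator norm bound on $\mathcal{B}_k$, and then read off the pointwise bound. The starting point is the decomposition
\[
\Psi_{1/n} q_n - \Psi_1 p = \sum_{l=1}^m \sum_{k_l=l-1}^{n-1}\cdots\sum_{k_1=0}^{k_2-1}\Psi_{1/n} a_{l,(k_l,\dots,k_1)},
\]
where $j_{\le 2m+l-1}(a_{l,(k_l,\dots,k_1)})=0$ and, crucially, $\Psi_{1/n} q_n - \Psi_1 p\in O(1/n^{2m})$ in the sense of the $O(s)$-definition. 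First I would apply $\Phi$ termwise. Here one must check that all the series involved (the $a_{l,\cdot}$ and their $\Psi_{1/n}$-images) actually lie in the subalgebra $D$ on which $\Phi$ is defined; this is where the Ninomiya--Victoir building blocks enter, since $\Phi\circ\Psi_{T/n}$ applied to $j_{\le K}(\exp(\tfrac{1}{\theta}a_i))$ reproduces $P^{(i)}_{T/n}$ up to an $\mathcal{O}((T/n)^{K+1})$ remainder by Theorem \ref{theo:taylor}, and hence $\Phi\Psi_{T/n}(q^{[\theta_i]})$ agrees with $Q^{[\theta_i]}_{T/n}$ up to a controlled remainder. Summing the geometric-type error over the $n$ iterations produces the factor $n$ that one must track against the $1/n^{2m}$ gain.

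Next I would convert ``$a\in O(s)$ and $j_{\le 2m+l-1}(a)=0$'' into a quantitative $\mathcal{B}_k$-estimate. The mechanism is: a homogeneous word $w$ of degree $r$ in the $a_i$'s maps, under $\Phi\circ\Psi_{T/n}$, to an operator of the form $(T/n)^r$ times a composition/product of at most $r$ first-order (or second-order) differential operators built from the $V_i$, hence an operator bounded on $\mathcal{B}_k$ with norm controlled by $(T/n)^r$ provided $k$ is large enough to absorb the derivatives --- each letter costs at most $2(d+1)$ orders of differentiation after the substitution $a_i\mapsto V_i^2/2$ or $V_0$, and one needs degree up to $2m+1$ worth of words plus the extra derivatives from the $P^{(i)}$-remainders, which is exactly what the stated $k=2(2m+1)(d+1)\sum_i\theta_i$ is designed to cover (the $\sum\theta_i$ accounts for the $\theta$-fold products in $q^{[\theta]}$). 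Because $\Psi_{1/n} q_n - \Psi_1 p\in O(1/n^{2m})$, the sum of all terms of degree between $1$ and $k$ in $\Phi\Psi_{T/n}(q_n)-\Phi\Psi_T(p)$ is bounded by $C/n^{2m}$ in $\mathcal{B}_k$-norm, and the terms of degree $>k$ are handled by the Taylor remainder term in Proposition \ref{theo:taylor}/the displayed integral remainder, which contributes $\mathcal{O}((T/n)^{k+1})\cdot\|g\|_{k}$-type errors that are absorbed for $2m\le k+1$.

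Assembling: write $P_T g - Q_{T,n}g = \big(P_T g - \Phi\Psi_T(j_{\le k}p)g\big) + \Phi\big(\Psi_T(j_{\le k}p) - \Psi_{1/n}(j_{\le k}q_n)\big)g + \big(\Phi\Psi_{1/n}(j_{\le k}q_n)g - Q_{T,n}g\big)$. The first bracket is $\mathcal{O}(T^{k+1})$ by Theorem \ref{theo:taylor} --- but here one has to be slightly careful and instead compare the iterated scheme, i.e.\ use $P_T = (P_{T/n})^n$ and telescope, so that each of the $n$ slabs contributes $\mathcal{O}((T/n)^{?})$ and the total is $\mathcal{O}(1/n^{2m})$; the clean way is to apply the $O(s)$-estimate of Theorem \ref{theo:connection} directly with $s=1/n$. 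The middle bracket is the genuine algebraic content and is $\le C/n^{2m}\|g\|_k$ by the previous paragraph. The third bracket is the ``replacement of exact semigroups $P^{(i)}$ by the exponential building blocks'' error; since these agree up to degree $k$ under $\Phi\Psi_{T/n}$, and one composes $n$ of them, this is again $\le C/n^{2m}\|g\|_k$ for the chosen $k$. The main obstacle --- and the place deserving the most care --- is the bookkeeping in this last step: controlling, uniformly in $n$, the operator norms of the $\mathcal{O}(n)$ many telescoped products $(\Phi\Psi_{T/n}q^{[\theta_i]})^{k_1}(\cdots)(Q^{[\theta_i]}_{T/n})^{\cdots}$, using the uniform boundedness of $(P_t)_{t\ge0}$ and of the building-block operators on $\mathcal{B}_k$ from Proposition \ref{theo:taylor}(1), so that the $n$ from the outer summation is beaten by the $1/n^{2m+1}$ per-term gain coming from $\Psi_{1/n} q_n-\Psi_1 p\in O(1/n^{2m})$ together with one extra $1/n$ from the number of summands. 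Finally, taking the sup over $x$ turns the $\mathcal{B}_k$ bound into the claimed pointwise estimate with the stated constant $k$.
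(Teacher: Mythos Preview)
Your approach matches the paper's: both split the error into the same three pieces---semigroup versus truncated algebra, the purely algebraic $O(1/n^{2m})$ content from Theorem~\ref{theo:connection}, and truncated algebra versus the actual scheme $Q_{T,n}$---handling the outer two via the asymptotic formulas $\Phi\Psi_t(j_{\le 2m}(\exp a_i)) = P^{(i)}_t + \mathcal{O}(t^{2m+1})$ together with a telescoping over the $n$ time-slabs, and both count derivatives per exponential factor in $q^{[\theta_i]}$ to arrive at the stated $k$. The paper's argument is in fact sketchier than yours (it literally says to ``simply copy the proof of Theorem~\ref{theo:connection}'' after replacing the formal symbols by operators), and your identification of the uniform $\mathcal{B}_k$-bookkeeping for the telescoped products as the point requiring the most care is exactly right; one cosmetic remark is that the paper truncates at $j_{\le 2m}$ in the $a_i$-alphabet rather than at the derivative index $k$ as you wrote, but your over-truncation is harmless.
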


\begin{proof}
Due to asymptotic formulas
$$
\Phi \Psi_{t} (j_{\leq 2m}(p)) = P_t + \mathcal{O}(t^{2m+1})
$$
and
$$
\Phi \Psi_{t} (j_{\leq 2m}(\exp(a_i))) = P^{(i)}_t + \mathcal{O}(t^{2m+1})
$$
where the constants in the Landau symbol depend on the derivatives of order at most $ 2(2m+1) $. Therefore we can simply copy the proof of Theorem \ref{theo:connection} by first replacing $ q_n $ with $ Q_{T,n} $ and $ p $ by $ P_T $. In the appearing sums we have to use the previous asymptotic formulas, namely
\begin{align*}
{\bigl( Q^{[\theta]}_{\frac{T}{n}} \bigr)}^n - P_{\frac{T}{n}} = & {\bigl( Q^{[\theta]}_{\frac{T}{n}} \bigr)}^n - \Phi \Psi_{\frac{T}{n}} (j_{\leq 2m}({(q^{[\theta]})}^n)) + \\
& + \Phi \Psi_{\frac{T}{n}} (j_{\leq 2m}({(q^{[\theta]})}^n - p)) + \\
& + \Phi \Psi_{\frac{T}{n}} (j_{\leq 2m}(p))-  P_{\frac{T}{n}},
\end{align*}
where the order behavior of the middle part has been shown in Theorem \ref{theo:connection} and the order behavior of the other two summands follows from the previous asymptotic formulas. Apparently each term in $ Q^{[\theta]} $, which is approximated due to the asymptotic formulas, increases the number of derivatives necessary to do the estimation by $ 2(2m+1) $, which leads to the formula for $ k $.
\end{proof}

\begin{ex}
The case $ m=1$ apparently corresponds to a version of the original Ninomiya-Victoir scheme.
\end{ex}

\begin{ex}
The case $ m=2$ corresponds to a scheme already presented in \cite{Fujiwara}. One can choose $ \theta_1 = 1 $ and $ \theta_2 = 2 $ and $ f_{\theta_1} = - \frac{1}{3} $ and $ f_{\theta_2} = \frac{4}{3}$.
\end{ex}

\begin{ex}
The case $ m=3 $ corresponds to Fujiwara's originally presented scheme, which in our language reads like follows. Notice that we do not need the full strength of our previous proof, which is built on Theorem \ref{theo:critical}.

For all mutually different numbers $ \theta_1, \theta_2, \theta_3 \in \mathbf{N} $, we can construct $6$-th order generalized Fujiwara scheme $ q $ with a form:
\begin{equation*}
q=f_{\theta_1}(q^{[\theta_1]})^n +f_{\theta_2}(q^{[\theta_2]})^n +f_{\theta_3}(q^{[\theta_3]})^n.
\end{equation*}

For the proof, which is presented for convenience here, we assume without loss of generality that $ \theta_1 < \theta_2 < \theta_3 $. We have,
\begin{equation*}
f=\begin{bmatrix}f_{\theta_1} \\ f_{\theta_2} \\ f_{\theta_3} \end{bmatrix} 
= \begin{bmatrix} \frac{\theta_1^4 }{(\theta_2^2 -\theta_1^2 ) (\theta_3^2 - \theta_1^2 )} \\
                  \frac{-\theta_2^4}{(\theta_3^2 - \theta_2^2 ) (\theta_2^2 - \theta_1^2 )} 
                  \\ \frac{\theta_3^4}{(\theta_3^2 - \theta_1^2 ) (\theta_3^2 - \theta_2^2 )}
  \end{bmatrix}.
\end{equation*}
By Corollary \ref{cor:order}, we have
\begin{align*}
j_{\le 4}(q^{[\theta_2]}-p) &= j_{\le 4}(\frac{\theta_1^2}{\theta_2^2}(q^{[\theta_1]}-p)), \\
j_{\le 4}(q^{[\theta_3]}-p) &= j_{\le 4}(\frac{\theta_1^2}{\theta_3^2}(q^{[\theta_1]}-p)).
\end{align*}
Then, by Corollary \ref{cor:substitute}, we have,
\begin{align*}
j_{\le 7}(q^{[\theta_2]}-p)^2) &= j_{\le 7}(\frac{\theta_1^4}{\theta_2^4}(q^{[\theta_1]}-p)^2), \\
j_{\le 7}((q^{[\theta_3]}-p)^2) &= j_{\le 7}(\frac{\theta_1^4}{\theta_3^4}(q^{[\theta_1]}-p)^2).
\end{align*}
Thus,
\begin{align*}
&j_{\le 7}(\sum_{i=1}^3f_{\theta_i}(q^{[\theta_i]} - p )^2) \\
&=j_{\le 7}((f_{\theta_1} + f_{\theta_2}\frac{\theta_1^4}{\theta_2^4} + f_{\theta_3}\frac{\theta_1^4}{\theta_3^4})(q^{[\theta_1]}-p)^2) \\
&=(\frac{\theta_1^4 }{(\theta_2^2 -\theta_1^2 ) (\theta_3^2 - \theta_1^2 )} - \frac{\theta_1^4}{(\theta_3^2 - \theta_2^2 ) (\theta_2^2 - \theta_1^2 )} + \frac{\theta_1^4}{(\theta_3^2 - \theta_1^2 ) (\theta_3^2 - \theta_2^2 )})j_{\le 7}((q^{[\theta_1]}-p)^2) \\
&=0.
\end{align*}
\end{ex}

\section[Implementation]{Implementation of a $m$--th order generalized Fujiwara scheme}

A scheme for approximation of expectation of order six was first introduced by Fujiwara \cite{Fujiwara}. In previous sections we theoretically constructed schemes for approximation of expectation of order $ 2m $ for arbitrary $ m\in\mathbf{N} $. In this section we show how to construct a practical scheme with approximating flow of vector fileds $ V_i $, which drive the SDE \eqref{sde}, by some suitable integration schemes. The usual choice for the integration schemes are Runge-Kutta methods. In our concrete example from mathematical finance we will use a seventh-order nine-stage explicit Runge-Kutta method with a very good stability, given by M.Tanaka et al. (see \cite{Tanaka1}, \cite{Tanaka2} and \cite{Tanaka3}). Higer order Runge-Kutta mehod often lose stability with respect to rounding error, truncated error and piling error. In addition, these effect decrease order of approximating error. Since in a concrete application of the algorithm, e.g. in mathematical finance, some of the ODEs can be very close to being stiff, the stability of the Runge--Kutta algorithm is of high importance. We show a relation between convergence order of weak approximation scheme and $m$-th order Runge-Kutta method. In addition we construct a concrete algorithm of a $m$-th order generalized Fujiwara scheme and analyze its computational cost and its approximating error. At the end we present a concrete numerical experiment. Tanaka's result is presented in the Appendix since we could not find any of his papers written in English.

The results of this section can be compared to those from \cite{ninnin:2009}.

\subsection{Runge-Kutta method}

For $ V \in C^\infty_b(\mathbf{R}^N,\mathbf{R}^N) $, the map $ \exp : C^\infty_b(\mathbf{R}^N,\mathbf{R}^N) \times \mathbf{R}_+ \times \mathbf{R}^N $ represents the flow driven by the vector field $V$ starting at $x_0$, i.e. the solution of the ordinary differential equation:
\begin{equation}\label{ode}
	\begin{aligned} 	
		\frac{d}{dt}x(t) &= V(x(t)), \\
		x(0) &= x_0.
	\end{aligned}
\end{equation}

\begin{defi}[$s$ stage explicit Runge--Kutta method of order $m$ for autonomous systems]
A $s$ stage explicit Runge--Kutta method of order $m$ for autonomous systems is determined by a lower triangular matrix $A=[a_{ij}]_{i,j=1}^s$ and a row $b=[ b_1 \cdots b_s]$ such that the following hold:
\begin{itemize}
 \item Let $h\in\mathbf{R}$, $t_0\in\mathbf{R}$ and let $t_n=t_{n-1}+h$ for all $n\in\mathbf{N}$. Given the vector $x_{n-1}$ as an approximation to $x(t_{n-1})$, where $x$ satisfies the equation \eqref{ode}, the approximation $x_n$ to $x(t_{n})$ is computed by evaluating, for $i=1,2,\dots,s$,
\[
  F_i=V(X_i),
\]
where $X_1,X_2,\dots,X_s$ are given by 
\[
 	X_i=x_{n-1}+h\sum_{j<i}a_{ij} F_j
\]
and then evaluating
\[
 	y_n=y_{n-1}+h\sum_{j=1}^s b_j F_j.
\]
\item The Taylor expansion of $x_n$ as a function of $h$ around $0$ should coincide with the Taylor expansion of $x(t_n)=x(t_{n-1}+h)$ up to (including) the term at the power $h^{m+1}$.
\end{itemize}
\end{defi}
\begin{rem}
 Usually Runge--Kutta methods are studied for general non-autonomous systems. In these cases the method is uniquely identified by a triplet $A$, $b$ and $c$, where $A$ and $b$ are as above and $c=[c_1 \dots c_s]^T$ is a suitable column vector.
\end{rem}
See Butcher \cite{butcher} and \cite{butcher2} for more details about the theory of Runge--Kutta method.


The next theorem shows that we need at least $ 12 $-th order Runge-Kutta method for $ 3 $-rd order generalized Fujiwara scheme.

\begin{theo}\label{thm:RK-order}
For all $ f \in C^\infty_{b}(\mathbf{R}^N\rightarrow \mathbf{R}^N) $, $ t \in \mathbf{R}_+ $ and $ x \in \mathbf{R}^N $, there exists $ C_i>0 $ such that
\begin{align*}
|f(\exp{(tV_0)}) - f(R_m(t,V_0)(x))| &\le C_0t^{m+1}, \\
|E[f(\exp{(( \sqrt{t}ZV_i)})-f(R_{2m}(\sqrt{t}Z,V_i)(x))]| &\le C_it^{m+1},
\end{align*}
where $ i \in \{1,\ldots ,d\} $ and $ Z \sim \mathcal{N}(0,1) $.
\end{theo}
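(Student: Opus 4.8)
The plan is to reduce the statement to a Taylor-expansion comparison between the exact flow and the Runge--Kutta approximant, handled separately for the drift term ($V_0$) and the diffusion terms ($V_i$, $i\geq 1$). For the drift estimate, I would first note that by the definition of an $m$-th order Runge--Kutta method for autonomous systems, the Taylor expansion of $R_m(t,V_0)(x)$ in the step size $t$ agrees with the Taylor expansion of $\exp(tV_0)(x)$ up to and including the term of order $t^{m+1}$. Hence the difference $\exp(tV_0)(x)-R_m(t,V_0)(x)$ is $\mathcal{O}(t^{m+2})$ as $t\to 0$, uniformly on $\mathbf{R}^N$ because $V_0\in C_b^\infty$; composing with $f\in C_b^\infty$ and using the mean value inequality $|f(y)-f(z)|\leq \|\nabla f\|_\infty |y-z|$ gives the bound $|f(\exp(tV_0)(x))-f(R_m(t,V_0)(x))|\leq C_0 t^{m+2}$, which in particular yields the claimed $C_0 t^{m+1}$ (indeed one order better; the weaker statement suffices for the scheme's global error).

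For the diffusion estimate, the key point is that although the one-step Runge--Kutta error is only $\mathcal{O}((\sqrt{t}Z)^{2m+2})=\mathcal{O}(t^{m+1}|Z|^{2m+2})$ pathwise, after taking the expectation over $Z\sim\mathcal{N}(0,1)$ one must show no cancellation is lost. I would expand both $f(\exp(\sqrt{t}ZV_i)(x))$ and $f(R_{2m}(\sqrt{t}Z,V_i)(x))$ as power series in the scalar $\sqrt{t}Z$; by the $2m$-th order property of the Runge--Kutta scheme these two expansions coincide through the term of degree $2m+1$ in $\sqrt{t}Z$, so their difference is a sum of terms of the form $(\sqrt{t}Z)^k \cdot g_k(x)$ with $k\geq 2m+2$ and $g_k$ bounded (built from derivatives of $V_i$ and $f$, all in $C_b^\infty$). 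Taking expectations, $E[(\sqrt{t}Z)^k]=t^{k/2}E[Z^k]$, which is $0$ for odd $k$ and $t^{k/2}(k-1)!!$ for even $k$; since the smallest surviving power is $k=2m+2$ (the odd term $k=2m+3$ vanishes), we get $E[\,\cdot\,]=\mathcal{O}(t^{m+1})$. The remainder of the series is controlled by the integral form of Taylor's theorem together with the Gaussian moment bounds $E[|Z|^k]<\infty$, which converge geometrically fast enough to dominate the factorially-growing derivative bounds on any fixed interval; this gives $|E[f(\exp(\sqrt{t}ZV_i)(x))-f(R_{2m}(\sqrt{t}Z,V_i)(x))]|\leq C_i t^{m+1}$ uniformly in $x$.

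The main obstacle I anticipate is the rigorous control of the Taylor remainder uniformly in $x$ and in $t$ near $0$ while integrating against the Gaussian: one must justify interchanging $E$ with the (infinite) Taylor series, or equivalently truncate at a suitable order and bound the remainder by $C\,t^{(M+1)/2}E[|Z|^{M+1}]\sup\|(\text{iterated derivatives})\|_\infty$, checking that the $C_b^\infty$ bounds on $V_i$ and $f$ make this finite. A clean way to avoid this is to fix $M=2m+2$ from the start, apply the Lagrange/integral remainder at that order (so only finitely many moments of $Z$ appear), and absorb everything else into the constant $C_i$; the boundedness of all vector fields and test-function derivatives is exactly what makes the Runge--Kutta increment $X_i=x+\sqrt{t}Z\sum_{j<i}a_{ij}F_j$ and its derivatives stay bounded, closing the estimate.
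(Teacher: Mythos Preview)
Your approach is essentially the paper's: invoke the Runge--Kutta order condition plus Taylor's theorem for the drift term, and for the diffusion term expand in powers of $\sqrt{t}\,Z$ and use that odd Gaussian moments vanish. There is, however, one indexing slip worth flagging. Under the standard convention (which is the one the paper's own proof actually uses, and which is consistent with the first inequality $|f(\exp(tV_0)(x))-f(R_m(t,V_0)(x))|\le C_0 t^{m+1}$), a Runge--Kutta method of order $2m$ has one-step error $O(h^{2m+1})$, not $O(h^{2m+2})$; the phrase ``up to (including) the term at the power $h^{m+1}$'' in the paper's definition appears to be a typo that misled you. Consequently the difference $f(\exp(\sqrt{t}ZV_i)(x))-f(R_{2m}(\sqrt{t}Z,V_i)(x))$ starts at $k=2m+1$, not $k=2m+2$: the paper writes it explicitly as
\[
\frac{t^{m+1/2}Z^{2m+1}}{2(m+1)!}\,V_i^{2m+1}f(x)+O(t^{m+1}),
\]
and the entire content of its proof is the observation that this leading $t^{m+1/2}$ term is killed by $E[Z^{2m+1}]=0$. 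You have the odd-moment ingredient in your argument, but you apply it one step too late (to $k=2m+3$) because you assumed the error already begins at $k=2m+2$. Shifting your starting index down by one and invoking $E[Z^{2m+1}]=0$ at the leading order recovers exactly the paper's argument.

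Your remainder discussion (truncate at a fixed finite order, Lagrange/integral remainder, finitely many Gaussian moments, $C_b^\infty$ bounds on $V_i$ and $f$ to get uniformity in $x$) is more careful than what the paper spells out, and is the right way to make the $O(t^{m+1})$ honest.
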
 
\begin{proof}
The first inequality follows from the definition of $m$-th order Runge-Kutta method and Taylor's theorem. Set $ i \in \{1,\ldots,d\} $. By the definition of Runge-Kutta method and Taylor's theorem again, we have,
\begin{align*}
&f(\exp{(( \sqrt{t}ZV_i)}) - f(R_{2m}(\sqrt{t}ZV_i)(x)) \\
&=\frac{t^{m+1/2}Z^{2m+1}}{2(m+1)!}V_i^{2m+1}f(x) + O(t^{m+1}). 
\end{align*}
Note that for all $ k \in \mathbf{N} $, $ E[Z^{2k+1}]=0 $ holds. Thus the conclusion is true.
\end{proof}

The next theorem shows that if we do not urge to have $O(n)$ computational cost, 4th order Runge-Kutta method is enough for sixth order scheme. 

\begin{theo}
For $ k,n \in \mathbf{N} $, for all $ f \in C_{b}^\infty(\mathbf{R}^N)$, for all $ i \in \{1,\ldots d\} $, and for all $ \ x \in \mathbf{R}^N $, there exists $ C_i>0 $ such that
\begin{equation*}
|E[f\left(\exp{\left(\frac{Z}{\sqrt{n}}V_i\right)}\right) - f\left( R_m\left( \frac{Z}{n^k\sqrt{n}},V_i\right)^{n^k}(x)\right) ]| \le \frac{C_i}{n^{km+k+m/2+1}}
\end{equation*}
holds where $ Z \sim \mathcal{N}(0,1) $.
\end{theo}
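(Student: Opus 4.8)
The plan is to freeze the Gaussian variable $Z$, bound the global Runge--Kutta error pathwise, and then integrate against the law of $Z$, letting the Gaussian moments absorb the stability constant. Fix a realisation $z$ of $Z$ and write $h:=\frac{z}{n^{k}\sqrt n}$ and $M:=n^{k}$, so that $Mh=\frac{z}{\sqrt n}$; thus $R_m\!\bigl(\frac{z}{n^{k}\sqrt n},V_i\bigr)^{n^{k}}(x)$ is the result of $M$ equal Runge--Kutta steps of length $h$ started at $x$, whereas $\exp\!\bigl(\frac{z}{\sqrt n}V_i\bigr)(x)=\exp(MhV_i)(x)$ is the exact value at time $Mh$ of the flow of $V_i$ through $x$. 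Put
\[
  G_n(z):=\exp(MhV_i)(x)-R_m(h,V_i)^{M}(x).
\]
Since $f\in C_b^\infty(\mathbf{R}^N)$ we have $|f(a)-f(b)|\le\|\nabla f\|_\infty\,|a-b|$ for all $a,b\in\mathbf{R}^N$, so the quantity to be estimated is at most $\|\nabla f\|_\infty\,E\bigl[|G_n(Z)|\bigr]$, and it suffices to show $E\bigl[|G_n(Z)|\bigr]\le C\,n^{-(km+k+m/2+1)}$.

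Next I would run the usual ``number of steps $\times$ one-step error $\times$ stability factor'' estimate for one-step methods, which here holds globally because $V_i\in C_b^\infty$. By the definition of an $m$-th order (explicit, autonomous) Runge--Kutta method the one-step Taylor expansions agree through order $h^{m+1}$, hence the one-step error satisfies $\bigl|\exp(hV_i)(y)-R_m(h,V_i)(y)\bigr|\le C_{\mathrm{loc}}\,|h|^{m+2}$ for every $y\in\mathbf{R}^N$ and every $|h|\le1$, with $C_{\mathrm{loc}}=C_{\mathrm{loc}}(V_i,m)$ finite (it involves only sup-norms of $V_i$ and finitely many of its derivatives); moreover the time-$s$ flow of $V_i$ is globally Lipschitz with constant $e^{|s|\|\nabla V_i\|_\infty}$. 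A telescoping argument writes $G_n(z)$ as the sum, over $l=0,\dots,M-1$, of the discrepancy produced at the $l$-th step and propagated forward by the flow for time $(M-l-1)h$; bounding each of these $M$ terms by $e^{(M-l-1)|h|\|\nabla V_i\|_\infty}\le e^{\frac{|z|}{\sqrt n}\|\nabla V_i\|_\infty}$ times $C_{\mathrm{loc}}|h|^{m+2}$ yields, for $|z|\le n^{k}\sqrt n$ (equivalently $|h|\le1$),
\[
  |G_n(z)|\le C_{\mathrm{loc}}\,e^{\frac{|z|}{\sqrt n}\|\nabla V_i\|_\infty}\,M\,|h|^{m+2}=C_{\mathrm{loc}}\,e^{\frac{|z|}{\sqrt n}\|\nabla V_i\|_\infty}\,|z|^{m+2}\,n^{\,k-(k+\frac12)(m+2)},
\]
and a direct computation gives $k-(k+\frac12)(m+2)=-(km+k+m/2+1)$. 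On the complementary event $\{|z|>n^{k}\sqrt n\}$ one uses only the crude bound $|G_n(z)|\le C|z|$, valid since both $\exp(\frac{z}{\sqrt n}V_i)(x)$ and $R_m(h,V_i)^{M}(x)$ stay within a constant multiple of $\frac{|z|}{\sqrt n}\|V_i\|_\infty$ of $x$.

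Taking expectations and using $\frac{|Z|}{\sqrt n}\le|Z|$ for $n\ge1$,
\[
  E\bigl[|G_n(Z)|\bigr]\le C_{\mathrm{loc}}\,n^{-(km+k+m/2+1)}\,E\bigl[|Z|^{m+2}e^{|Z|\|\nabla V_i\|_\infty}\bigr]+C\,E\bigl[|Z|\,\mathbf 1_{\{|Z|>n^{k}\sqrt n\}}\bigr].
\]
The first expectation is a finite constant, since Gaussian moments with exponential weights are finite, and the second equals $\sqrt{2/\pi}\,e^{-n^{2k+1}/2}$, which is $o\bigl(n^{-(km+k+m/2+1)}\bigr)$. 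Combined with the Lipschitz bound for $f$ this gives the claim, with $C_i$ depending only on $\|\nabla f\|_\infty$, $V_i$, $m$ and $k$.

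The only genuine subtlety is the non-uniformity of the stability constant: over the random time horizon $\frac{|Z|}{\sqrt n}$ the error-accumulation factor is of size $e^{\mathrm{const}\cdot|Z|}$ rather than bounded, so the pathwise estimate must be married to Gaussian moment bounds, and the super-exponentially rare event on which the prescribed stepsize $\frac{Z}{n^{k}\sqrt n}$ fails to be small has to be disposed of separately. (If one prefers to invoke only the weaker one-step bound $O(h^{m+1})$, the same computation gives $n^{-(km+m/2+1/2)}$, which is then upgraded via $Z\stackrel{d}{=}-Z$: as the whole prescribed stepsize is proportional to $Z$, the function $z\mapsto G_n(z)$ is even in $z$ up to an odd summand of vanishing mean, its Taylor expansion in $z$ starts at order $m+1$ with coefficients of size $O(n^{k})$, and the first surviving---even---term, of order $z^{m+2}$, again produces the exponent $-(km+k+m/2+1)$.)
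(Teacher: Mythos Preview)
Your argument is correct and follows the same skeleton as the paper's proof: rewrite $\exp(\frac{Z}{\sqrt n}V_i)$ as the $n^k$-fold iterate of the step-$h$ flow with $h=\frac{Z}{n^k\sqrt n}$, telescope the difference against the $n^k$-fold Runge--Kutta iterate, bound each of the $n^k$ summands via the one-step error $C|h|^{m+2}$ (which is exactly what the paper's own definition of an $m$-th order method yields, since there the Taylor expansions are declared to match through $h^{m+1}$), and read off the exponent $k-(k+\tfrac12)(m+2)=-(km+k+\tfrac m2+1)$.

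The paper carries out this computation at the pull-back-operator level and is far terser: it writes the telescoping sum $\sum_l A^l(A-B)B^{n^k-l-1}$ and jumps directly to the bound $C_i\,n^k/n^{(2k+1)(m/2+1)}$ without commenting on the size of the propagating factors $A^l$, $B^{n^k-l-1}$, on the integration in $Z$, or on the regime where the step $|h|$ is not small. Your write-up supplies precisely these missing analytic details---the Lipschitz control of the flow producing the factor $e^{c|Z|/\sqrt n}$, the finite Gaussian moment $E[|Z|^{m+2}e^{c|Z|}]$ that absorbs it, and the separate disposal of the super-exponentially unlikely tail $\{|Z|>n^{k+1/2}\}$. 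These are things the paper silently folds into the unspecified constant $C_i$, so the route is the same but your version is the more carefully justified of the two.
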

\begin{proof}
\begin{align*}
&E[f(\exp{(\frac{Z}{\sqrt{n}}V_i)(x)}) - f(R_m(\frac{Z}{n^k\sqrt{n}}{1},V_i)^{n^k}(x))]| \\
&= |E[(\exp{\frac{Z}{n^k\sqrt{n}}V_i})^{n^k}f(x) - R_m(\frac{Z}{n^k\sqrt{n}},V_i)^{n^k}f(x)]| \\
&=E[\sum_{l=0}^{n^k-1}(\exp{\frac{Z}{n^k\sqrt{n}}V_i})^l((\exp{\frac{Z}{n^k\sqrt{n}}V_i})- R_m(\frac{Z}{n^k\sqrt{n}},V_i)) \\
&R_m(\frac{Z}{n^l\sqrt{n}},V_i)^{n^k-l-1}f(x)] \\
&\le \frac{C_i}{n^{2(k+1/2)(m/2+1)}}n^k \\
&\le \frac{C_i}{n^{(km+k+m/2+1)}}.
\end{align*}
\end{proof}

\subsection[Recipe]{Recipe for $m$--th order generalized Fujiwara scheme}

In the following subsection we will provide the pseudocode for implementation of the $m$-th order generalized Fujiwara scheme with fixed coefficients $\theta_1, \theta_2, \dots, \theta_m$. Let $f=[f_1\cdots f_m]^T$ be as in the section \ref{sec:scheme} and let the function $solveDE(V,x_0,t)$ return the solution of the ODE \eqref{ode} at time $t$ with initial condition $x(0)=x_0$.

\begin{algorithm}
	\KwData{function $g$, vector fields $V_0,V_1,\dots,V_d$, time $T$, initial condition $x_0$, number of partition points $n$, number of samples $M$}
	\KwResult{approximation $E$ of the expectation $\mathrm{E}[f(X_T)]$, where $X_t$ is a process defined by the SDE \eqref{sde}}
	$Q \leftarrow 0\in \mathbf{R}^{1\times m}$\;
   \For(\tcc*[h]{expectation (MonteCarlo or quasi Monte Carlo)}){$o\leftarrow 1$ \KwTo $M$}{
   	$Q\leftarrow Q+samplePath(g,V_0,\dots,V_d,T,x_0,n)$\;
	}
   $Q\leftarrow \frac{1}{M}Q$\;
   \tcc{approx. for $E(g(X(T,x_0)))$ is the linear combination $\sum_i f_ i* Q_i$}
   $E\leftarrow Q\, f$\;
   \Return{$E$}
	\caption{Fujiwara}\label{alg:fujiwara}
\end{algorithm}

\begin{algorithm}[H]
	\KwData{function $g$, vector fields $V_0,V_1,\dots,V_d$, time $T$, initial condition $x_0$, number of partition points $n$}
	\KwResult{ row vector $Q=[Q_s^{[\theta_1]}\cdots Q_s^{[\theta_m]}]\in\mathbf{R}^{1\times m}$ calculated for a random simulated path}
	$Q\leftarrow 0\in\mathbf{R}^{1 \times m}$\;
    generate independent Bernoulli(1/2) random variables $\Lambda= [\Lambda_1,\dots,\Lambda_n]$\;
    \For {$k\leftarrow 1$ \KwTo $m$}{
      \tcc{generate all standard normal random variables that are needed}
      $D\leftarrow T/(n\theta_k) [1\dots 1]\in \mathbf{R}^{1\times (\theta_k n)}$\;
      $N\leftarrow \sqrt{T/(n \theta_k)} [N_1,\dots, N_{\theta_k n} ]$, where $N_1,\dots, N_{\theta_k n} $ are i.i.d., with $N_1\sim N(0,I_d)$\; 
      \tcc{first row serves for the component without Brownian motion}
      $Z\leftarrow \begin{bmatrix}
                   D \\
                   N
                  \end{bmatrix}$\;
      $X\leftarrow x_0$\; 
      \For(\tcc*[f]{consequtively solve the ODE's}){$j\leftarrow 1$ \KwTo $n$}{ 
        \eIf(\tcc*[f]{solve appropriate ODE}){ $\Lambda_j=1$}{
          \For(\tcc*[f]{repetition because of finer dissection}) {$\Theta\leftarrow 1$ \KwTo $\theta_k$}{
            \For(\tcc*[f]{solving ODE's}) {$i\leftarrow 0$ \KwTo $d$}{
              $X\leftarrow solveDE(V_i,X,Z_{i+1,(\Theta-1)*n+j})$\;
              }
            }
          }
          {
          \For(\tcc*[f]{repetition because of finer dissection}) {$\Theta\leftarrow 1$ \KwTo $\theta_k$}{
            \For(\tcc*[f]{solving ODE's}) {$i\leftarrow 0$ \KwTo $d$}{
              $X\leftarrow solveDE(V_{d-i},X,Z_{d-i+1,(\Theta-1)*n+j})$\;
            }
          }
        }
      }
      $Q_s^{[\theta_k]}\leftarrow g(X)$\;
    }	
	\Return{$Q$}
	\caption{samplePath}\label{alg:sample}
\end{algorithm}

\begin{rem}
 Usually in modern computers memory size is no longer an issue. From this perspective it seems sensible to generate all needed random variables in advance. Namely, the random variables for various $\theta_i$'s do not have to be independent, therefore we can reduce its number by reusing them, and there exist efficient algorithms which speed up the process of their generation if we do it in one batch instead of step by step as it is written in Algorithm \ref{alg:sample}.
\end{rem}

\subsection{Computational cost}

\begin{theo}\label{thm:computational_cost}
Let $ d,n,M,m, T, \theta_1,\dots,\theta_m$ be as above, such that $ T/n $ is sufficiently small. Furthermore, assume that each step of the method $solveDE$ needs $a$ operations, i.e. additions, multiplications and function evaluations, that $B$ operations are needed to generate a (pseudo or quasi) Bernoulli random variable and that $Z$ operations are needed to generate a standard $d$-dimensional normally distributed (pseudo or quasi) random variable. Then the computational cost of Algorithm \ref{alg:fujiwara} is $ M\Big(5m+n\big((d+1)a+Z+1\big)\sum_{k=1}^m \theta_k +nB+1\Big) +2m $.
\end{theo}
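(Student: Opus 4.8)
The plan is to read off the cost directly from the pseudocode of Algorithms \ref{alg:fujiwara} and \ref{alg:sample}, summing the cost of each line and accounting for the multiplicities induced by the nested loops. First I would analyze \texttt{samplePath} (Algorithm \ref{alg:sample}): the outer loop over $k$ runs $m$ times; inside it, one Bernoulli vector $\Lambda$ of length $n$ costs $nB$ operations (generated once, outside the $k$-loop, so this contributes $nB$ and not $mnB$ — I would note this carefully), the construction of $D$ and $N$ uses $Z$ operations per normal draw over $\theta_k n$ draws, and the main work is the double loop: for each of the $n$ partition steps $j$, and each of the $\theta_k$ refinement repetitions $\Theta$, one solves $d+1$ ODEs via \texttt{solveDE}, each costing $a$ operations per step, plus one function evaluation $g(X)$ at the end (the ``$+1$'' per $\theta_k$-block, or absorbed into the $5m$ overhead term). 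Summing the ODE work over the $k$-loop gives $n(d+1)a\sum_{k=1}^m\theta_k$; the normal-variable generation gives $nZ\sum_{k=1}^m\theta_k$; bookkeeping the per-step increment and the assignment $Q_s^{[\theta_k]}\leftarrow g(X)$ contributes the remaining $n\sum_{k=1}^m\theta_k$ and the $5m$ term respectively, together with $nB$ for the Bernoulli draws.

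Next I would analyze \texttt{Fujiwara} (Algorithm \ref{alg:fujiwara}): it calls \texttt{samplePath} exactly $M$ times inside the Monte Carlo loop, each call followed by one vector addition of length $m$ (absorbed in the constant overhead), then performs one scaling $Q\leftarrow \frac{1}{M}Q$ costing $m$ operations and one inner product $Q\,f$ costing $2m$ operations (or $2m-1$, rounded up). This yields the outer structure $M\cdot(\text{cost of one sample path}) + 2m$, and collecting all the per-sample-path contributions into the claimed bracket $\big(5m + n((d+1)a+Z+1)\sum_{k=1}^m\theta_k + nB + 1\big)$ completes the count.

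The only genuinely nontrivial point — and the main thing to get right — is the careful attribution of loop multiplicities: distinguishing quantities generated once per $k$-iteration ($D$, $N$, the $g(X)$ evaluation, contributing factors of $\sum_k\theta_k$ or $m$) from quantities generated once per \texttt{samplePath} call ($\Lambda$, contributing only $nB$), and then multiplying the whole sample-path cost by $M$ while leaving the final linear-combination steps $2m$ outside the Monte Carlo loop. Everything else is a routine addition of per-line costs, so I would present the argument as a direct tally rather than an induction, with a short sentence justifying why the ``sufficiently small $T/n$'' hypothesis guarantees that each \texttt{solveDE} step indeed needs the stated fixed number $a$ of operations (i.e. the Runge--Kutta stage count does not grow).
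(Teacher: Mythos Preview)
Your approach is correct and matches the paper's own proof: both compute the cost $C$ of one call to \texttt{samplePath} by tallying the nested loops (yielding $C = 5m + n((d+1)a+Z+1)\sum_k\theta_k + nB$), then observe that Algorithm~\ref{alg:fujiwara} costs $M(C+1)+2m$. The paper's write-up is terser---it simply records the per-$j$ and per-$k$ contributions without your more detailed attribution of individual overhead terms---but the structure of the argument is identical.
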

\begin{proof}
Let us denote the computational cost of the Algorithm \ref{alg:sample} by $C$. A straightforward calculation shows that the computational cost of the Algorithm \ref{alg:fujiwara} is equal to $M(C+1)+2m$.

For fixed $j\in\{1,\dots,n\}$ in Algorithm \ref{alg:sample} we have $\theta_k (d+1)a$ operations. Hence, for fixed $k\in\{1,\dots,m\}$ there are $5+n\theta_k((d+1)a+Z+1)$ operations. It follows that $C=5m+n\big((d+1)a+Z+1\big)\sum_{k=1}^m \theta_k +nB$.
\end{proof}

\begin{rem}
 Rigorous use of Runge-Kutta algorithms for solving ODEs in the algorithm is only suitable for building a universal solver for SDEs of the type \eqref{sde}. In concrete practical applications it is to be expected that many of the ODEs of the type \eqref{ode} have a nice enough explicit solution 
\end{rem}

The error of the algorithm consists of discretization part, i.e. the error due to numerical solution of ODEs and the error which comes from the scheme, and of the convergence error which comes from the Monte Carlo or quasi Monte Carlo simulation. 

\begin{theo}
For $ n,M \in \mathbf{N} $ such that $ T/n $ is sufficiently small. The approximation error of Algorithm 1 is $ O(1/n^{2m}) +  O(1/\sqrt{M}) $.
\end{theo}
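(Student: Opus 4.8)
The final theorem asserts that the approximation error of Algorithm~1 splits as $O(1/n^{2m}) + O(1/\sqrt{M})$, where the first term collects the discretization error (numerical ODE solves plus the scheme's intrinsic weak order) and the second is the Monte Carlo statistical error.

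The plan is to decompose the total error into three pieces and bound each separately. Write $E_{\mathrm{alg}}$ for the output of Algorithm~1, $\widehat{Q}_{T,n}g(x_0)$ for the (random, Monte-Carlo-free) idealized estimator obtained by replacing every $solveDE$ call by the exact flow $\operatorname{Fl}^{V_i}$ but keeping the finite-valued random variables used in the scheme, and $Q_{T,n}g(x_0)$ for the exact generalized Fujiwara operator of Theorem~\ref{theo:final_result}. Then
$$
\bigl| E_{\mathrm{alg}} - P_T g(x_0)\bigr| \le \underbrace{\bigl| E_{\mathrm{alg}} - \mathrm{E}[\widehat{Q}_{T,n}g(x_0)]\bigr|}_{\text{(I) Monte Carlo}} + \underbrace{\bigl|\mathrm{E}[\widehat{Q}_{T,n}g(x_0)] - Q_{T,n}g(x_0)\bigr|}_{\text{(II) ODE discretization}} + \underbrace{\bigl|Q_{T,n}g(x_0) - P_T g(x_0)\bigr|}_{\text{(III) scheme error}}.
$$
Term (III) is exactly $O(1/n^{2m})$ by Theorem~\ref{theo:final_result}, since $Q_{T,n}$ is precisely the operator built from the exact semigroups $Q_{T/n}^{[\theta_i]}$; here one invokes that $g\in C_b^\infty$ so the relevant $k$-norm $\|g\|_k$ with $k = 2(2m+1)(d+1)\sum_i\theta_i$ is finite.

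For term (I), I would condition on the (frozen) ODE solver and observe that the $M$ samples produced by Algorithm~\ref{alg:sample} are i.i.d.\ copies of a bounded random variable (bounded because $g\in C_b^\infty$ and each $solveDE$ step is a deterministic continuous map of bounded data), so the empirical mean deviates from its expectation by $O(1/\sqrt M)$ in the $L^2$ (or, via Chebyshev/Berry--Esseen, in a probabilistic) sense; this gives the $O(1/\sqrt M)$ contribution. For term (II), the key point is that the method $solveDE$ is an $r$-th order Runge--Kutta scheme with $r$ large enough — as quantified in Theorem~\ref{thm:RK-order}, one needs $r = m$ along the drift direction $V_0$ and $r = 2m$ along the diffusion directions $V_i$, so that a single step of the solver with time increment $\sqrt{T/(n\theta_k)}$ (resp.\ $T/(n\theta_k)$) incurs an error whose odd moments cancel and whose leading surviving term is $O\bigl((T/n)^{m+1/2}\bigr)$ per step. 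Summing the per-step errors telescopically along the $n\theta_k$ composed flows inside $(Q^{[\theta_k]}_{T/n})^n$ — exactly the telescoping of Proposition~\ref{prop:decomposition} applied to "exact flow minus RK flow" — and using the uniform boundedness of the semigroups in $\mathcal B_k$ (the Proposition quoted from \cite{IW}), one finds that the accumulated discretization error is $n\theta_k \cdot O\bigl((T/n)^{m+1}\bigr) \cdot O(1) = O(1/n^m)$ from the drift part, while the diffusion part, after taking expectations so the odd-order RK error terms vanish by $\mathrm{E}[Z^{2j+1}]=0$, contributes $O(1/n^{2m})$; combining, term (II) is $O(1/n^m)$ in general, but — and this is the point of choosing the RK orders as in Theorem~\ref{thm:RK-order} — if the drift solver is taken of order $2m$ as well (as the surrounding text's "$12$-th order for $m=3$" remark indicates), term (II) becomes $O(1/n^{2m})$.

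The main obstacle is term (II): one must verify that the per-step Runge--Kutta errors, when composed $n\theta_k$ times and then run through the linear combination $\sum_i f_{\theta_i}(\cdot)$ and through $\Phi$, do not lose a power of $n$ and do not blow up in the $k$-norm. This requires (a) that the intermediate flows remain in a fixed bounded region so that all $C_b^\infty$-bounds are uniform, which follows from $V_i\in C_b^\infty$; (b) a careful bookkeeping of how many derivatives of $g$ each RK error term consumes — this is where the factor $2(2m+1)$ in $k$ enters, matching Theorem~\ref{theo:final_result}; and (c) the cancellation of odd moments of $Z$, Theorem~\ref{thm:RK-order}, to upgrade the naive $O(1/n^{m})$ to $O(1/n^{2m})$ along the diffusion directions. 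Once these are in place, adding (I), (II), (III) yields $O(1/n^{2m}) + O(1/\sqrt M)$, as claimed.
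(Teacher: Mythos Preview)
The paper states this theorem without proof, so there is nothing to compare against directly; your three-term decomposition into Monte Carlo error, Runge--Kutta discretization error, and scheme error is the natural and correct strategy, and terms (I) and (III) are handled exactly as you say.

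There is, however, a bookkeeping slip in your treatment of term (II). You read Theorem~\ref{thm:RK-order} with its parameter $m$ equal to the scheme parameter $m$, which yields a per-step error $O((T/n)^{m+1})$ and hence an accumulated error $O(1/n^m)$ --- not the claimed $O(1/n^{2m})$. You notice this for the drift direction and correctly upgrade the drift solver to order $2m$, but you leave the diffusion solver at order $2m$. That is not enough: along $V_i$, $i\geq 1$, the effective time increment is $\sqrt{T/n}\,Z$, so an order-$r$ Runge--Kutta step gives (after the odd-moment cancellation $E[Z^{2j+1}]=0$) a per-step expectation error of size $O((T/n)^{r/2+1})$. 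For this to be $O((T/n)^{2m+1})$ one needs $r=4m$, not $r=2m$. This is exactly what the remark preceding Theorem~\ref{thm:RK-order} (``at least $12$-th order Runge--Kutta method for $3$-rd order generalized Fujiwara scheme'', i.e.\ $4m=12$ when $m=3$) is pointing to. With that correction, your telescoping argument goes through and (II) is indeed $O(1/n^{2m})$.

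A minor point: your decomposition as written compares the Monte Carlo output $E_{\mathrm{alg}}$ (which uses the RK solver) to the expectation of the \emph{exact-flow} sample $\widehat{Q}_{T,n}$, so term (I) as stated mixes the RK and MC errors. The clean split is $E_{\mathrm{alg}}$ versus the expectation of the \emph{RK} sample (pure MC, $O(1/\sqrt{M})$), then RK expectation versus exact-flow expectation (term (II)), then exact-flow expectation $=Q_{T,n}g(x_0)$ versus $P_Tg(x_0)$ (term (III)). Your subsequent discussion of (I) shows you have this in mind; just align the displayed decomposition with it.
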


\begin{rem}
 One should take great care when choosing a suitable subdivision of the interval, since the coefficient of the discretisation error directly depends on function $f$ and vector fields $V_i$, thus, although bounded, the coefficient can get fairly large in some cases. Moreover, the convergence error of the Monte Carlo simulation is directly proportional to the sqare root of variance of $f(X(T,x))$. As in the case of discretisation error this should be taken into account, since, although constant, the variance can be large comparing to the size of error we would like to achieve.
\end{rem}

\subsection{Numerical example}

For our numerical example we have chosen the genearlized Fujiwara scheme of order $6$ with $\theta_1=1$, $\theta_2=2$ and $\theta_3=3$, i.e. the scheme that first appeared in \cite{Fujiwara}, and the generalized Fujiwara scheme of order $8$ with the choice of parameters $\theta_1=1$, $\theta_2=2$, $\theta_3=3$ and $\theta_4=4$.

In order to compare the algorithm to the basic Ninomiya-Victoir scheme we consider an Asian call option written on an asset whose price process follows the Heston stochastic volatility model. Let $X_1$ be the price process of an asset following the Heston model:
\begin{equation}\label{eq:heston1}
	\begin{aligned}
	 	X_1(t,x) =x_1 &+\int_0^t \mu X_1(s,x)\, ds + \int_0^t X_1(s,x) \sqrt{X_2(s,t)}\, dB^1(s)\\
	 	X_2(t,x) =x_2 &+\int_0^t \alpha (\theta-X_2(s,x))\, ds\\
	 					& + \int_0^t \beta \sqrt{X_2(s,t)}\big(\rho \, dB^1(s) +\sqrt{1-\rho^2}\, dB^2(s)\big),
	\end{aligned}
\end{equation}
where $x=(x_1,x_2)\in (\mathbf{R}_{>0})^2$, $(B^1(t),B^2(t))$ is a two-dimensional standard Brownian motion, $-1\leq \rho\leq 1$ and $\alpha,\theta,\mu$ are some positive coefficients satisfying $2\alpha\theta -\beta^2>0$ to ensure that the volatility does not reach zero. The payoff of the Asian call option on this asset with maturity $T$ and strike $K$ is $\max(X_3(T,x)/T-K,0)$, where
\begin{equation}\label{eq:heston2}
	X_3(t,s)=\int_0^t X_1(s,x)\, ds.
\end{equation}
Hence, the price of this option becomes $D\times E[\max(X_3(T,x)/T-K,0)]$ where $D$ is an appropriate discount factor on which we do not focus in this experiment. As in \cite{NV} take $T=1$, $K=1.05$, $\mu=0.05$, $\alpha=2.0$, $\beta=0.1$, $\theta=0.09$, $\rho=0$ and $x=(1.0,0.09)$. 

Up to the error of the magnitude $10^{-6}$ we have
\[
 	E[\max(X_3(T,x)/T-K,0)]=6.0473534496*10^{-2}
\]
obtained from \cite{NN}. Let $X(t,x)=(X_1(t,x),X_2(t,x),X_3(t,x))^T$. SDEs \eqref{eq:heston1} and \eqref{eq:heston2} can be transformed in the Stratonovich form since $ X_2 \neq 0 $:
\[
 X(t,x)=\sum_{i=0}^2 \int_0^t V_i(X(s,x))\circ dB^i(s),
\]
where
\begin{equation}\label{eq:vector_fields}
 	\begin{aligned}
 	  V_0(y_1,y_2,y_3)&=(y_1(\mu-\frac{y_2}{2}-\frac{\rho\beta}{4}), \alpha(\theta-y_2)-\frac{\beta^2}{4}, y_1)^T\\
 	  V_1(y_1,y_2,y_3)&=(y_1\sqrt{y_2},\rho\beta\sqrt{y_2},0)^T\\
 	  V_2(y_1,y_2,y_3)&=(0, \beta\sqrt{(1-\rho^2)y_2}, 0)^T.
 	\end{aligned}
\end{equation}

Taking our choice of $\rho=0$ into consideration we get exact solutions of ODEs of the type \eqref{ode} driven by vector fields $V_1$ and $V_2$ (see \cite{NV} for more details):
\begin{equation}\label{eq:Heston_solution}
 	\begin{aligned}
 	 	&\exp(tV_1)(x_1,x_2,x_3)^T=(x_1 e^{t\sqrt{x_2}},x_2,x_3),\\
 	 	&\exp(tV_2)(x_1,x_2,x_3)^T=\Big(x_1,\big( \frac{\beta t}{2}+\sqrt{x_2}\big)^2,x_3\Big).
 	\end{aligned}
\end{equation}

According to the proof of Theorem \ref{thm:RK-order} we need a Runge--Kutta
method of order at least $6$ to approximate the solution
$\exp(tV_0)(x_1,x_2,x_3)^T$ for generalized Fujiwara scheme of order $6$ and a
Runge--Kutta method of order at least $8$ for a generalized Fujiwara scheme of
order $8$ if we want a linear algorithm. If we allow quadratic computational
cost for the the generalized Fujiwara scheme of the weak order $8$, it is
sufficient to use a Runge--Kutta method of order $4$. In our example we used $9$
stage $7$-th order Runge--Kutta method from \cite{Tanaka1}, defined by the Butcher's tableau
presented in the Appendix.

The pseudorandom numbers in MC were generated by the Mersenne twister algorithm. The QMC was performed using Sobol sequence, generated by the library SobolSeq51.dll provided by Broda (see \cite{broda}). Both MC and QMC integration were performed using $10^8$ sample paths.

The use of exact solutions of ODEs driven by vector fields $V_1$ and $V_2$ reduces the computational cost of the algorithm by $ 2 M na\sum_{k=1}^o \theta_k  $, where $o$ designates the order of weak generalized Fujiwara scheme divided by $2$, $M$ denotes the number of MC/QMC sample paths, $n$ is the number of subdivision points and $a$ is the number of operations required for solving ODE's driven by $V_1$ or $V_2$, if we compare it to the results of Theorem \ref{thm:computational_cost}.

\begin{center}
\begin{tiny} 
\begin{tabular}{lrrrr} \toprule
Method / $ n $ & $ 2 $ & $3$ & $4$ & $5$\\ \midrule
NV & 0.00208536744970740 & 0.00095536839891733   & 0.00055694952858933 \\
GF (order 6) MC &  0.00006154245956983  &  0.00003651735446759  &  0.00003522768790512 \\
GF (order 6) QMC & 0.00005526280089 &  0.0000105789197729  &  0.0000040357269938  &  0.0000028986604713 \\
GF (order 8) MC &  0.00004536485526115  &  0.00003694928288030  & 0.000055051968504230\\ 
GF (order 8) QMC & 0.0000178413262662  &  0.0000013695959963  &  0.0000010913411477 \\ \bottomrule
\end{tabular}
\end{tiny}
\end{center}

\begin{figure}[ht]
\includegraphics[scale=0.6]{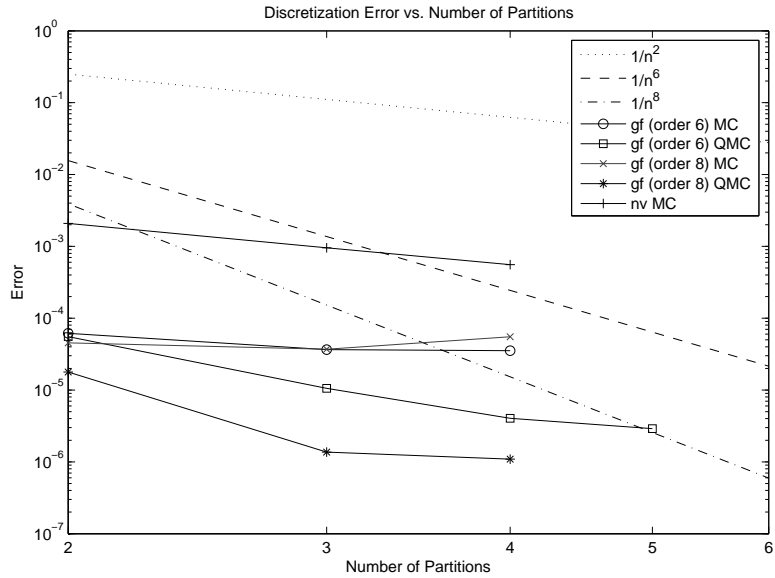}
\caption{Error coming from discretization}\label{fig:disc-error}
\end{figure}

\begin{figure}[ht]
 \includegraphics[scale=0.6]{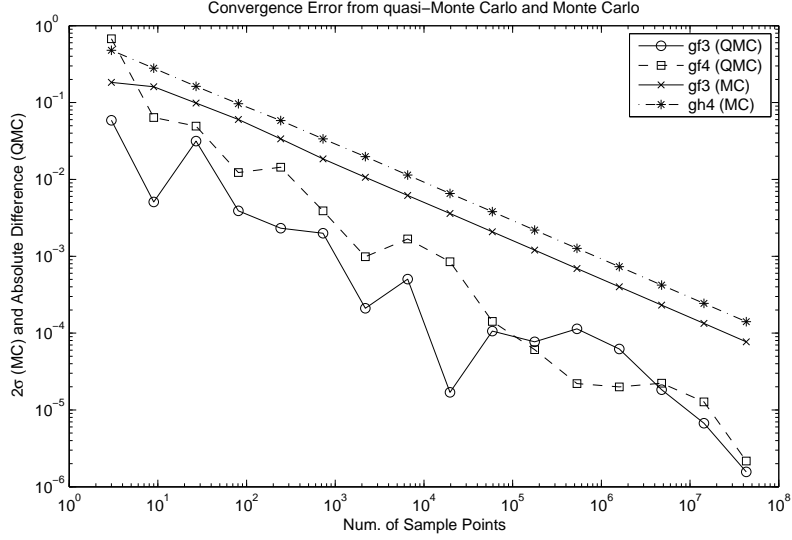}
\caption{Error coming from integration}
\label{fig:int-error}
\end{figure}

The graph in Fig. \ref{fig:disc-error} clearly shows that the new extrapolation method
reduces the order of the discretization error in comparison to the original Ninomiya-Victoir
algorithm for several magnitudes. In the MC case the discretization error almost immediatly converges to the integration error (see Fig \ref{fig:disc-error} and Fig. \ref{fig:int-error}). Also in the QMC case the discretization
error is soon (for small $n$) overshadowed by the integration error caused by QMC integration (see Fig. \ref{fig:int-error}), the weak order of the extrapolated algorithms can still be observed from the slope of curves in the graph in Fig. \ref{fig:disc-error}.

\section*{Acknowledgements}

The research was done while first and third authors were guests at the Research Unit of Financial and Actuarial Mathematics at the Vienna University of Technology. The research of the third author was supported by AMaMeF exchange grant No.~2080. A part of the research of the second and the third author was kindly supported by Vienna WWTF project ``Mathematik und Kreditrisiken''. The second author was kindly supported by means of the START prize project Y 328.

\section{Appendix}

We give our original proof of Lemma \ref{lem:fujiwara}.

\begin{defi}
Let $\mathfrak{g}$ be a Lie algebra. For $X, Y \in \mathfrak{g}$ define $c_1 (X,Y)= X+Y$ and $c_n(X,Y)$ by the following recursion formula
\begin{align*}
(n+1) & c_{n+1} (X,Y) = \frac{1}{2} [X-Y,c_n(X,Y)]+ \\
& +\sum_{p\geq 1, 2p\leq n} K_{2p} \sum_{\substack{k_1,\dots,k_{2p}\geq 0,\\ k_1+\dots+k_{2p}=n}} [c_{k_1} (X,Y),[\dots,[c_{k_{2p}}(X,Y),X+Y]\dots]],
\end{align*}
where $K_{2p}$ are coefficients defined in \cite[2.15.9]{Var}
\end{defi}
For more details about $c_n(X,Y)$ see \cite[Sec. 2.15]{Var}.

\begin{lem}\label{lem:antisim}
\begin{equation}\label{eq:exchange}
c_n(X,Y)=(-1)^{n+1} c_n(Y,X)
\end{equation}
\end{lem}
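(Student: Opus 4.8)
\textbf{Proof strategy for Lemma \ref{lem:antisim}.}
The plan is to argue by strong induction on $n$, using the recursion formula that defines $c_n(X,Y)$. The base case $n=1$ is immediate, since $c_1(X,Y)=X+Y=c_1(Y,X)=(-1)^{1+1}c_1(Y,X)$. For the inductive step, I assume $c_k(X,Y)=(-1)^{k+1}c_k(Y,X)$ for all $k\leq n$ and I compute $(n+1)c_{n+1}(X,Y)$ by substituting $Y$ for $X$ and $X$ for $Y$ in the right-hand side of the defining recursion, then compare term by term with $(-1)^{n+2}(n+1)c_{n+1}(Y,X)$, i.e.\ with $(-1)^{n}(n+1)c_{n+1}(Y,X)$.

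The first term of the recursion is $\tfrac12[X-Y,c_n(X,Y)]$. Swapping the roles gives $\tfrac12[Y-X,c_n(Y,X)]=\tfrac12[Y-X,(-1)^{n+1}c_n(X,Y)]=(-1)^{n+1}\cdot(-1)\cdot\tfrac12[X-Y,c_n(X,Y)]=(-1)^n\tfrac12[X-Y,c_n(X,Y)]$, where I used the inductive hypothesis for $c_n$ and the antisymmetry of the Lie bracket in the factor $Y-X$. So this term carries the desired sign $(-1)^n$ relative to the original. The second (double) sum runs over $k_1+\dots+k_{2p}=n$ with an even number $2p$ of nested brackets acting on $X+Y$. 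Since $X+Y$ is symmetric in $X,Y$, only the inner Lie elements $c_{k_j}$ pick up signs: swapping $X\leftrightarrow Y$ replaces the summand by $\prod_{j=1}^{2p}(-1)^{k_j+1}$ times the original summand, i.e.\ by $(-1)^{\,2p+\sum_j k_j}=(-1)^{2p+n}=(-1)^n$ times the original. Hence every summand again picks up exactly the factor $(-1)^n$, and since $K_{2p}$ is a scalar it is unaffected. Summing up, $(n+1)c_{n+1}(Y,X)=(-1)^n(n+1)c_{n+1}(X,Y)$, which is precisely $c_{n+1}(X,Y)=(-1)^{n+2}c_{n+1}(Y,X)$ after dividing by $n+1$ and rewriting $(-1)^n=(-1)^{n+2}$.

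The bookkeeping of signs in the double sum is the only delicate point: one must notice that the number of bracketed factors $c_{k_j}$ is always \emph{even} (it equals $2p$), so that the product of the $2p$ individual signs $(-1)^{k_j+1}$ contributes a factor $(-1)^{\sum_j(k_j+1)}=(-1)^{n+2p}=(-1)^n$, matching exactly the sign produced by the first term. If the number of nested brackets were odd, the two contributions would have opposite parity and the identity would fail; it is precisely the structure of the Baker--Campbell--Hausdorff recursion (only even-order bracket terms beyond the leading one, the odd coefficients $K_{2p+1}$ vanishing) that makes the antisymmetry go through. Once Lemma \ref{lem:antisim} is in place, one recovers $\log\overleftarrow{q^{[1]}}$ from $\log\overrightarrow{q^{[1]}}$ by applying it with $X=\sum_{i} a_i$ reversed against the opposite ordering, which yields the alternating-sign expansion \eqref{eq:expansion} of Lemma \ref{lem:fujiwara}.
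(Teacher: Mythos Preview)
Your proof is correct and follows essentially the same route as the paper's: strong induction on $n$, checking that the first bracket term picks up a sign $(-1)^n$ under $X\leftrightarrow Y$ via $Y-X=-(X-Y)$ and the inductive hypothesis, and that each summand in the double sum picks up $(-1)^{\sum_j(k_j+1)}=(-1)^{n+2p}=(-1)^n$ because $X+Y$ is symmetric and the number of nested $c_{k_j}$ factors is even. One tiny wording slip: the sign in the first term comes from bilinearity (pulling out the $-1$ from $Y-X$), not from the antisymmetry $[a,b]=-[b,a]$ of the bracket.
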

\begin{proof} For $n=1$ the assertion is clear.

Suppose we have $c_m(X,Y)=(-1)^{m+1} c_m(Y,X)$ for all $m\leq n$. By recursion we obtain
\begin{align*}
&(n+1)c_{n+1} (Y,X) \\
&= \frac{1}{2} [Y-X,c_n(Y,X)]+ \\
& +\sum_{p\geq 1, 2p\leq n} K_{2p} \sum_{\substack{k_1,\dots ,k_{2p}\geq 0, \\ k_1+\dots +k_{2p}=n}} [c_{k_1} (Y,X),[\dots,[c_{k_{2p}}(Y,X),X+Y]\dots ]].
\end{align*}
Using the induction hypothesis and bilinearity of Lie brackets, the above equation transforms into
\begin{align*}
&(n+1) c_{n+1} (Y,X) \\
&= \frac{1}{2} (-1)^{n+2}[X-Y,c_n(X,Y)]+ \\
& +\sum_{p\geq 1, 2p\leq n} K_{2p} \sum_{\substack{k_1,\dots,k_{2p}\geq 0,\\ k_1+\dots +k_{2p}=n}} (-1)^{k_1+\dots k_{2p}+2p}[c_{k_1} (X,Y),[\dots, \\
&[c_{k_{2p}}(X,Y),X+Y]\dots ]]\\
& = (-1)^{n+2} \Big(\frac{1}{2} [X-Y,c_n(X,Y)]+ \\
& +\sum_{p\geq 1, 2p\leq n} K_{2p} \sum_{\substack{k_1,\dots,k_{2p}\geq 0,\\ k_1+\dots +k_{2p}=n}} [c_{k_1} (X,Y),[\dots,[c_{k_{2p}}(X,Y),X+Y]\dots ]]\Big)\\
&= (-1)^{n+2} (n+1)c_{n+1}(X,Y)
\end{align*}which proves the assertion.
\end{proof}

Let $\tau_{i,d}$ denote $j_i(\overrightarrow{q^{[1]}})$.

\begin{proof}[Proof of Lemma \ref{lem:fujiwara}] The case $d=0$ is trivial. Next we consider the case $d=1$. Using Baker--Campbell--Hausdorff formula to expand $\tau_{l,1}$ and $j_l(\log(\overleftarrow{q^{[1]}}(1)))$ and applying \eqref{eq:exchange} proves the formula \eqref{eq:expansion}. By applying Baker--Campbell--Hausdorff formula to the definition of $\tau_{l,d}$ we get 
\begin{align*}
\tau_{l,d}&=j_l(\log(\overrightarrow{q^{[1]}}(d)))=j_l\big(\log(\exp(\log(\overrightarrow{q^{[1]}}(d-1)))\exp(a_d))\big)\\ &= j_l\big(\sum_{k=1}^l c_k(\sum_{j=1}^l \tau_{j,d-1},a_d)\big).
\end{align*}
Suppose that for all $n\in\mathbf{N}$, $n<d$ we have
\[
\log(\overleftarrow{q^{[1]}}(n))=\sum_{i=1}^\infty (-1)^{i+1} \tau_{i,n}.
\]
Using Lemma \ref{lem:antisim}, the induction hypothesis and the BCH-formula on $j_l(\log(\overleftarrow{q^{[1]}}(d)))$ gives us
\begin{align*}
j_l(\log(\overleftarrow{q^{[1]}}(d)))&=j_l\Big(\log\big(\exp(a_d) \exp(\log(\overleftarrow{q^{[1]}}(d-1)))\big)\Big)\\ &=j_l\big(\sum_{k=1}^l c_k(A_d, \log(\overleftarrow{q^{[1]}}(d-1)))\big)\\ &=j_l\big(\sum_{k=1}^l c_k(a_d, \sum_{j=1}^l (-1)^{j+1} \tau_{j,d-1})\big)\\ &=j_l\big(\sum_{k=1}^l (-1)^{k+1}c_k(\sum_{j=1}^l (-1)^{j+1} \tau_{j,d-1},a_d)\big).
\end{align*}
Thus, it is sufficient to show that for all $k\in\{1,\dots,l\}$ and $l\in\mathbf{N}$  we have
\begin{equation}\label{eq:hipoteza}
j_l\big(c_k(\sum_{j=1}^l (-1)^{j+1} \tau_{j,d-1},a_d)\big)=(-1)^{k+l} j_l\big(c_k(\sum_{j=1}^l \tau_{j,d-1},a_d )\big).
\end{equation}
Note that the equality in \eqref{eq:hipoteza} holds trivially for $k>l$.

Since $\tau_{j,d-1}$ is a homogeneous polynomial of degree $j$, the assertion is clear for $k=1$ and all $l\in \mathbf{N}$.  It is easy to see that for $l'<l$ we have
\begin{equation}\label{eq:skrcitev}
j_{l'}\big(c_m(\sum_{j=1}^l (-1)^{j+1} \tau_{j,d-1},a_d)\big)= j_{l'}\big(c_m(\sum_{j=1}^{l'} (-1)^{j+1} \tau_{j,d-1},a_d)\big).
\end{equation}
Let now
$j_l\big(c_m(\sum_{j=1}^l (-1)^{j+1} \tau_{j,d-1},a_d)\big)=(-1)^{k+l} j_l\big(c_m(\sum_{j=1}^l \tau_{j,d-1},a_d )\big)$ for all $m\in\{1,\dots,k\}$ and $l\in\mathbf{N}$, then we have
\begin{align*}
&j_l\big(c_{k+1}(\sum_{j=1}^l (-1)^{j+1} \tau_{j,d-1},a_d)\big) \\
&= \frac{1}{k+1}j_l \Big( \frac{1}{2} \big[ \sum_{j=1}^l (-1)^{j+1} \tau_{j,d-1}-a_d, c_k(\sum_{j=1}^l (-1)^{j+1} \tau_{j,d-1},a_d)\big] \\ 
&+ \sum_{\substack{p\geq 1\\ 2p\leq k+1}} K_{2p} \sum_{\substack{k_1,\dots,k_{2p}>0\\ k_1+\dots+k_{2p}=k+1}} \big[ c_{k_1}(\sum_{j=1}^l (-1)^{j+1} \tau_{j,d-1},a_d),\\
&\big[c_{k_2}(\sum_{j=1}^l (-1)^{j+1} \tau_{j,d-1},a_d),\dots,\\
&\big[c_{k_{2p}}(\sum_{j=1}^l (-1)^{j+1} \tau_{j,d-1},a_d),\sum_{j=1}^l (-1)^{j+1} \tau_{j,d-1}+a_d\big]\dots\big]\big]\Big)=   \\
&=\frac{1}{k+1} \Big( \frac{1}{2} \sum_{j=1}^{l-1} \big[ (-1)^{j+1} \tau_{j,d-1}, j_{l-j}(c_k(\sum_{j=1}^l (-1)^{j+1} \tau_{j,d-1},a_d))\big]+\\ 
&+ \big[ -a_d, j_{l-1}(c_k(\sum_{j=1}^l (-1)^{j+1} \tau_{j,d-1},a_d))\big]+\\ &+ \sum_{\substack{p\geq 1\\ 2p\leq k+1}} K_{2p} \sum_{\substack{k_1,\dots,k_{2p}>0\\ k_1+\dots+k_{2p}=k}} \sum_{\substack{m_1,\dots,m_{2p+1}>0\\ m_1+\dots+m_{2p+1}=l}} \big[ j_{m_1}(c_{k_1}(\sum_{j=1}^l (-1)^{j+1} \tau_{j,d-1},a_d)),\\ 
&\big[j_{m_2}(c_{k_2}(\sum_{j=1}^l (-1)^{j+1} \tau_{j,d-1},a_d)),\dots, \big[j_{m_{2p}}(c_{k_{2p}}(\sum_{j=1}^l (-1)^{j+1} \tau_{j,d-1},a_d)),\\
&(-1)^{m_{2p+1}+1} \tau_{m_{2p+1},d-1}+j_{m_{2p+1}}(a_d)\big]\dots\big]\big]\Big).
\end{align*}
Using \eqref{eq:skrcitev}, the induction hypothesis and the bilinearity of Lie brackets the above expression transforms into
\begin{align*}
j_l\big(c_{k+1}&(\sum_{j=1}^l (-1)^{j+1} \tau_{j,d-1},a_d)\big)= \frac{1}{k+1} \Big( \frac{1}{2} \sum_{j=1}^{l-1} (-1)^{j+1+l-j+k} \big[ \tau_{j,d-1},\\ 
&j_{l-j}(c_k(\sum_{j=1}^l  \tau_{j,d-1},a_d))\big] + (-1)^{l+k-1}\big[ -a_d, j_{l-1}(c_k(\sum_{j=1}^l  \tau_{j,d-1},a_d))\big]\\ 
&+ \sum_{\substack{p\geq 1\\ 2p\leq k+1}} K_{2p} \sum_{\substack{k_1,\dots,k_{2p}>0\\ k_1+\dots+k_{2p}=k}} \sum_{\substack{m_1,\dots,m_{2p+1}>0\\ m_1+\dots+m_{2p+1}=l}} (-1)^{m_1+\dots+m_{2p+1}+1+k_1+\dots+k_{2p}}\\ 
&\big[ j_{m_1}(c_{k_1}(\sum_{j=1}^l \tau_{j,d-1},a_d)),\big[j_{m_2}(c_{k_2}(\sum_{j=1}^l  \tau_{j,d-1},a_d)),\dots,\\ &\big[j_{m_{2p}}(c_{k_{2p}}(\sum_{j=1}^l  \tau_{j,d-1},a_d)), \tau_{m_{2p+1},d-1}+j_{m_{2p+1}}(a_d)\big]\dots\big]\big]\Big).
\end{align*}
Thus,
\begin{align*}
&j_l\big(c_{k+1}(\sum_{j=1}^l(-1)^{j+1} \tau_{j,d-1},a_d)\big)\\
&= (-1)^{k+l+1} \frac{1}{k+1}j_l \Big( \frac{1}{2} \big[ \sum_{j=1}^l \tau_{j,d-1}-a_d, c_k(\sum_{j=1}^l  \tau_{j,d-1},a_d)\big] \\ 
&+ \sum_{\substack{p\geq 1\\ 2p\leq k+1}} K_{2p} \sum_{\substack{k_1,\dots,k_{2p}>0\\ k_1+\dots+k_{2p}=k+1}} \big[ c_{k_1}(\sum_{j=1}^l  \tau_{j,d-1},a_d),\big[c_{k_2}(\sum_{j=1}^l \tau_{j,d-1},a_d),\dots,\\ 
&\big[c_{k_{2p}}(\sum_{j=1}^l \tau_{j,d-1},a_d),\sum_{j=1}^l  \tau_{j,d-1}+a_d\big]\dots\big]\big]\Big) \\
&= (-1)^{k+l+1}j_l\big(c_{k+1}(\sum_{j=1}^l \tau_{j,d-1},a_d )\big),
\end{align*}
which is the desired result.
\end{proof}


\begin{thebibliography}{9}
   \bibitem{broda} British-Russian Offshore Development Agency (BRODA), \texttt{http://www.broda.co.uk/}. 
	\bibitem{butcher} Butcher, J. C., The Numerical Analysis of Ordinary Differential Equations, John Wiley \& Sons, Chichester--New York--Brisbane--Toronto--Singapore, 1987.
	\bibitem{butcher2} Butcher, J. C., Numerical Methods for Ordinary Differential Equations, John Wiley \& Sons, Chichester, 2003. 
	\bibitem{cohn} Cohn, P. M., Skew fields. Theory
of general division rings, Encyclopedia of Mathematics and its Applications 57, Cambridge
University Press, Cambridge, 1995.
  \bibitem{Fujiwara} Takehiro Fujiwara, Sixth order methods of Kusuoka approximation, UTMS 2006-7.
\bibitem{IW} Nobuyuki Ikeda and Shinzo Watanabe, Stochastic Differential Equations and Diffusion Processes, Second Edition, Elsevier,1989. 
\bibitem{Kohatsu} Arturo Kohatsu-Higa , Weak approximations. A Malliavin calculus approach, Math. Comp., \textbf{70}(2001), 135-172.
	\bibitem{NN} Mariko Ninomiya and Syoiti Ninomiya, A new weak approximation scheme of stochastic differential equations and the Runge--Kutta mathod, arXiv:0709.2434v3.
  \bibitem{NV} Syoiti Ninomiya and Nicolas Victoir, Weak approximation of stochastic differential equations and application to derivative pricing,  Appl. Math. Finance 15, no. 1-2 (2008), 107--121. arXiv:math/0605361.
\bibitem{ninnin:2009} Mariko Ninomiya and Syoiti Ninomiya, A new higher-order weak approximation scheme for stochastic differential equations and the Runge-Kutta method. Finance Stoch. 13 (2009), no. 3, 415--443.
\bibitem{Tanaka1} Tanaka, M., Kasahara, E., Muramatsu, S. and Yamashita, S., 
On a Solution of the Order Conditions for the Nine-Stage Seventh-Order Explicit Runge-Kutta Method(in Japanese), 
Information Processing Society of Japan, Vol. 33, No. 12(1992) 1506-1511.
\bibitem{Tanaka2} Tanaka, M., Muramatsu, S. and Yamashita, S., 
On the Optimization of Some Nine-Stage Seventh-Order Runge-Kutta Method(in Japanese), 
Information Processing Society of Japan, Vol. 33, No. 12(1992) 1512-1526.
\bibitem{Tanaka3} Tanaka, M., Yamashita, S., Kubo, E. and Nozaki, Y., 
On Seventh-order Nine-stage Explicit Runge-Kutta Methods with Extended Region of Stability(in Japanese), 
Information Processing Society of Japan, Vol. 34, No. 1(1993) 52-61.
 \bibitem{Var} V. S. Varadarajan, Lie groups, Lie algebras, and their representations, Springer--Verlag, New York, 1984.
\end{thebibliography}
\end{document}